\newtheorem{theorem}{Theorem}[section]
\newtheorem{lemma}[theorem]{Lemma}
\newtheorem*{corollary*}{Corollary}
\theoremstyle{definition}
\newtheorem{example}[theorem]{Example}
\def\thm@space@setup{%
  \thm@preskip=\parskip \thm@postskip=0pt
}
\newenvironment{proof*}[1][Proof]{\noindent\textbf{#1.} }{\ \rule{0.5em}{0.5em}}
\newcommand{\rr}{\mathbb{R}}
\newcommand{\dir}{\mathrm{Dir}}
\newcommand{\bb}[1]{\boldsymbol{#1}}
\newcommand{\ee}[2]{{#1}\times 10^{#2}}
\def \hfillx {\hspace*{-\textwidth} \hfill}
\DeclareMathOperator{\vol}{Vol}
\DeclarePairedDelimiter\norm{\lVert}{\rVert}
\numberwithin{equation}{section}
\newcommand{\blind}{1}
\begin{document}

\def\spacingset#1{\renewcommand{\baselinestretch}%
{#1}\small\normalsize} \spacingset{1}

%%%%%%%%%%%%%%%%%%%%%%%%%%%%%%%%%%%%%%%%%%%%%%%%%%%%%%%%%%%%%%%%%%%%%%%%%%%%%%

\if1\blind
{
  \title{\bf Superiority of Bayes estimators over the MLE in high dimensional multinomial models and its implication for nonparametric Bayes theory}
  \author{Rabi Bhattacharya\thanks{
    The authors gratefully acknowledge \textit{ NSF grant DMS 1811317.}}\hspace{.2cm}\\
    Department of Mathematics, University of Arizona\\
    and \\
    Rachel Oliver\footnotemark[1] \\
    Department of Mathematics, University of Arizona}
  \maketitle
} \fi

\if0\blind
{
  \bigskip
  \bigskip
  \bigskip
  \begin{center}
    {\LARGE\bf  Superiority of Bayes estimators over the MLE in high dimensional multinomial models and its implication for nonparametric Bayes theory}
\end{center}
  \medskip
} \fi

\bigskip
\begin{abstract}
This article focuses on the performance of Bayes estimators, in comparison with the MLE, in multinomial models with a relatively large number of cells. The prior for the Bayes estimator is taken to be the conjugate Dirichlet, i.e., the multivariate Beta, with exchangeable distributions over the coordinates, including the non-informative uniform distribution. The choice of the multinomial is motivated by its many applications in business and industry, but also by its use in providing a simple nonparametric estimator of an unknown distribution. It is striking that the Bayes procedure outperforms the asymptotically efficient MLE over most of the parameter spaces for even moderately large dimensional parameter space and rather large sample sizes.
\end{abstract}

\noindent%
{\it Keywords:}  high dimensional multinomials, Bayes estimators versus the MLE, nonparametric Bayes
\vfill

\newpage
\spacingset{1.5} % DON'T change the spacing!

%%%%%%%%%%%%%%%%%%%%%%%%%%%%%%%%%%%%%%%%%%%%%%%%%%%%%%%%%%%%%%%%%%%%%%%%
% Introduction

\section{Introduction}

The present article shows by analytical computations and simulations that Bayes estimators even in moderately high dimensional multinomial models outperform the MLE on most of the parameter space. High dimensional multinomial models are useful for industrial planning. For example, for planning its manufacturing process a big departmental store may try to estimate the proportions of a certain type of clothing, by sizes and/or colors, demanded by its customers (see the example in Section \ref{sec:data}).

For another important motivation for pursuing this study of multinomials, note that an unknown distribution on a state space may be approximated nonparametrically by probabilities of members of a fine partition. Sampling from this approximate distribution is the same as sampling from a multinomial. Bayes estimation of the approximating multinomial with a conjugate Beta (i.e. Dirichlet), prior was a motivation for Ferguson’s path breaking development of nonparametric Bayes theory of estimation of a probability distribution on the state space with the so called Dirichlet process prior \citep{ferguson1973}.

It may be pointed out that the venerable Bernstein-von Mises theorem (see, e.g., \citet[p.~339]{bickel2001}, or \citet[p.~190]{textbook}) is designed to show how Bayes estimators with reasonable priors are asymptotically as efficient as the MLE and approach it as the sample size increases. It turns out from our study that in high dimensional multinomial models it is the MLE which tries to catch up with Bayes as the sample size increases!

The MLE of a multinomial with parameter $\bb\theta=(\theta_1,\theta_2,\dotsc,\theta_k)$ is given by $\bb{\hat\theta}=(\hat\theta_1,\hat\theta_2,\dotsc,\hat\theta_k)$, where $\hat\theta_i=n_i/n$ is the proportion of i.i.d. observations $X_1,\dotsc,X_n$ in the $i$-th cell estimating its true proportion $\theta_i$. The Bayes estimator $\bb{d_B}$ uses the conjugate prior $\operatorname{Beta}(\alpha_1,\alpha_2,\dotsc,\alpha_k)$, also called a Dirichlet prior and denoted $\dir(\alpha_1,\alpha_2,\dotsc,\alpha_k)$. We take $\alpha_1=\alpha_2=\dotsb=\alpha_k$, for purposes of ease in computation and assuming no a priori preference for some categories over others. Indeed, when the common value is 1 one has the so-called \emph{non-informative prior}, assigning the uniform density on the parameter space

\begin{equation}
E_k\equiv \left\{\bb\theta = (\theta_1,\theta_2,\dotsc,\theta_k): \theta_i \ge 0 \;\forall i, \sum_{1\le i \le k} \theta_i =1 \right\}.
\label{eq:ek1}
\end{equation}

The posterior distribution of $\bb\theta$ is $\dir(\alpha_1+n_1,\dotsc,\alpha_k+n_k)$ and its mean is the Bayes estimator $\bb{d_B}$ of $\bb\theta$, as given in \eqref{eq:db}.

For the most part, we take the loss function to be squared error under which the risk functions $R(\bb{\hat\theta},\bb\theta)$ and $R(\bb{d_B},\bb\theta)$ are given by \eqref{eq:mlerisk} and \eqref{eq:Bayesrisk}. Both analytical calculations and simulations are carried out for cases with $\alpha_1=\alpha_2=\dotsb=\alpha_k$.

For an approximation of the nonparametric estimation of an unknown probability on some state space, the Bayes procedure with a Dirichlet prior and a base measure $\bb\alpha$ with total mass $C$, we consider for each $k$, $\alpha_i\equiv\bb\alpha(\{i\})=C/k \;\forall i 1\le i \le k$. Such an approximation with large $k$, named the ``tree construction'' of the Dirichlet process may be found in \citet[Chapter 3]{ghosh_ram2003} and \citet[Chapter 4]{ghosal2017}. One may think of $C/k$ as the $\bb\alpha$ measure of each of $k$ members of a partition, which can be ensured if $\bb\alpha$ is absolutely continuous (with respect to Lebesgue measure on an Euclidean state space or a volume measure on a manifold).

Section \ref{sec:mult} introduces the multinomial distribution and the estimators under consideration. Section \ref{sec:volume} explores the volume of the parameter space where the Bayes estimators have lower risk than the MLE. To build intuition we begin in Sections \ref{sub:omega2} and \ref{sub:omega3} with analytical computations for the binomial model, i.e., a multinomial model with $k=2$, and then a multinomial with $k=3$. For $k\ge 4$, the geometry of the region of the simplex \eqref{eq:ek1} where $R(\bb{\hat\theta},\bb\theta)\le R(\bb{d_B},\bb\theta)$ is complex. Extensive simulations show that, under the uniform prior $\dir(1,1,\dotsc,1)$, for moderate and large vaules of $k$ and small as well as large values of the sample size $n$, the Bayes estimator has a smaller expected squared error than that of the MLE on most of the parameter space (see Figure \ref{fig:sampling}).

Next consider the multivariate Beta prior with parameters $\alpha_1=\alpha_2=\dotsb=\alpha_k=1/k$ with a large $k$, suitable for a simple nonparametric estimation of an unknown distribution as alluded to above. In this case, for sufficiently large $k$, the region where the MLE has a smaller expected squared error than Bayes is identifiable as the union of $k$ regions, each being a ``cone'' shaped structure minus a cap at the base (see Figure \ref{fig:delta3} in Section \ref{sec:volume} for the case $k=3$). Its area, i.e., its volume measure in the simplex $E_k$ in \eqref{eq:ek1}, relative to the volume measure of $E_k$ or, equivalently, of its complement, is estimated analytically in Section 3 (see Lemma \ref{lem:sa}). This conservative estimate is compared with the ``true'' value obtained by simulation in Table \ref{tab:1kprior}, once again showing that the region where the MLE has a smaller expected risk that that of the Bayes is rather negligible.

In Section \ref{sec:avg}, the average difference over the parameter space of expected squared errors $R(\bb{\hat\theta},\bb\theta)-R(\bb{d_B},\bb\theta)$ is computed in proportion to the average of $R(\bb{\hat\theta},\bb\theta)$ and compared (see Table \ref{tab:comparison} and Figure \ref{fig:avgriskdec}). This scaled difference is shown to be maximum under the uniform prior$\dir(1,1,\dotsc,1)$. This may come as a suprise because the proportion of the volume of $E_k$ in which the Bayes risk is smaller than that of the MLE is generally larger under the prior $\dir(1/k,\dotsc,1/k)$ than under the uniform!

Section \ref{sec:tv} briefly illustrates the approximations in $L_1$ or total variation distance between a true distribution and its nonparametric estimators based on the MLE and those of the nonparametric Bayes estimators. Section \ref{sec:data} presents a data example to illustrate an industrial application of a high dimensional multinomial. A final Section \ref{sec:final} lays down some final Remarks.

%%%%%%%%%%%%%%%%%%%%%%%%%%%%%%%%%%%%%%%%%%%%%%%%%%%%%%%%%%%%
% The Multinomial Distribution

\section{The multinomial distribution}\label{sec:mult}

Consider the estimation of the parameter $\boldsymbol \theta=(\theta_1,\theta_2,\dotsc,\theta_k)$ in the multinomial distribution, where $\theta_j$ is the proportion of the $j$-th class in a population with $k\ge 2$ classes ($j=1,2,\dotsc,k$). Based on a simple random sample of size $n$ from the population, let $n_1,n_2,\dotsc,n_k$ be the numbers in the sample belonging to each of the $k$ classes. Since $(n_1,n_2,\dotsc,n_k)$ is a sufficient statistic for $\boldsymbol\theta$, it is enough to consider the distribution of $(n_1,n_2,\dotsc,n_k)$ for the estimation of $\boldsymbol\theta$. Namely,

\begin{equation}
f(n_1,n_2,\dotsc,n_k;\boldsymbol\theta)=\frac{n!}{n_1!n_2!\dotsb n_k!}\theta_1^{n_1}\theta_2^{n_2}\dotsb \theta_k^{n_k}\quad\quad 
\left\{(\theta_1,\theta_2,\dotsc,\theta_k)\in\rr^k: \theta_j\ge 0~\forall j, \sum_{j=1}^k \theta_j=1\right\}.
\label{eq:mult}
\end{equation}

The Maximuim Likelihood Estimator (MLE) is $\hat\theta\equiv\left(\frac{n_1}{n},\frac{n_2}{n},\dotsc,\frac{n_k}{n}\right)$. The multivariate Beta, or \emph{Dirichlet}, prior $\dir(\alpha_1,\alpha_2,\dotsc,\alpha_k)$ has density with respect to Lebesgue measure on $\Theta^\sim$, where $\Theta^\sim$ is given by 

\[
\Theta^\sim\equiv
\left\{ (\theta_1,\theta_2,\dotsc,\theta_{k-1})\in\rr^{k-1}: \theta_j\ge 0~\forall j, \sum_{j=1}^{k-1} \theta_j\le1\right\}.
\]

The Dirichlet density is

\begin{equation}
\pi(\theta_1,\theta_1,\dotsc,\theta_k)=\frac{\Gamma(\alpha_1+\dotsb+\alpha_k)}{\Gamma(\alpha_1)\Gamma(\alpha_2)\dotsb\Gamma(\alpha_k)} \theta_1^{\alpha_1-1}\theta_2^{\alpha_2-1}\dotsb\theta_k^{\alpha_k-1},\quad \text{for }\boldsymbol\theta\in\Theta^\sim, 
\label{eq:dir}
\end{equation}

where  $\theta_k=1-\theta_1-\theta_2-\dotsb-\theta_{k-1}$.

It is well known , and easy to prove, that if the prior is $\dir(\alpha_1,\dotsc,\alpha_k)$, the posterior distribution of $\boldsymbol\theta$ is Dirichlet $\dir(\alpha_1+n_1,\alpha_2+n_2,\dotsc,\alpha_k+n_k)$ (see, e.g., \citet{textbook}).

Under squared error loss: $L(\boldsymbol\theta,\boldsymbol\theta')=|\boldsymbol\theta-\boldsymbol\theta'|^2=\sum_{1\le i\le k} (\theta_i-\theta_i')^2$, then the risk function of the MLE ($\hat\theta=(\hat\theta_1,\dotsc,\hat\theta_k)$ with $\hat\theta_i=n_i/n$) is given by

\begin{equation}
R(\boldsymbol{\hat\theta},\boldsymbol\theta)=\sum_{1\le i \le k} \frac{\theta_i(1-\theta_i)}{n}=\frac{1-\sum_{1\le i\le k} \theta_i^2}{n}
\label{eq:mlerisk}
\end{equation}

We wish to choose an exchangeable prior--invariant under permutation of coordinates. Thus we choose $\alpha_1=\alpha_2=\dotsb =\alpha_k=C_{k,n}$, where $C_{k,n}$ is some constant which may depend on $k$ and $n$. The choices of $C_{k,n}$ that lead to better estimators in terms of risk under squared error loss will be investigated.

Under the Dirichlet prior $\dir(\alpha_1,\dotsc,\alpha_k)$ with $\alpha_1=\alpha_2=\dotsb=\alpha_k=C_{k,n}$ and squared error loss, the Bayes estimator is 

\begin{equation}
\boldsymbol d_B=(d_{B1},\dotsc,d_{Bk}), \quad \text{with } d_{Bi}=\frac{n_i+C_{k,n}}{n+kC_{k,n}} \quad (i=1,2,\dotsc, k)
\label{eq:db}
\end{equation}

and its risk function is (see, e.g., \citet{textbook})

\begin{align}
R(\boldsymbol d_B, \boldsymbol\theta)&=\sum_{1\le i \le k} \frac{n\theta_i(1-\theta_i)+(C_{k,n}-k\theta_iC_{k,n})^2}{\left(n+kC_{k,n}\right)^2}\nonumber \\ 
&=\left(\sum_{1\le i \le k} (\theta_i-\theta_i^2)\right)\frac{n}{\left(n+kC_{k,n}\right)^2} + \frac{C_{k,n}^2\left(k-2k+k^2\left(\sum_{1\le i \le k} \theta_i^2\right)\right)}{\left(n+kC_{k,n}\right)^2}\nonumber  \\ 
&=\left( 1- \sum_{1\le i \le k} \theta_i^2\right)\frac{n}{\left(n+kC_{k,n}\right)^2} - \frac{kC_{k,n}^2}{\left(n+kC_{k,n}\right)^2}+\frac{k^2C_{k,n}^2 \sum_{1\le i \le k} \theta_i^2}{\left(n+kC_{k,n}\right)^2} \nonumber \\
&=\frac{n-kC_{k,n}^2}{\left(n+kC_{k,n}\right)^2}+\frac{\left(k^2C_{k,n}^2-n\right) \sum_{1\le i \le k} \theta_i^2}{\left(n+kC_{k,n}\right)^2}
\label{eq:Bayesrisk}
\end{align}

Hence, $R(\boldsymbol{\hat\theta},\boldsymbol\theta)\le R(\boldsymbol d_B,\boldsymbol\theta)$ (and thus the MLE has lower risk than the Bayes estimator) only on the set

\begin{equation}
\left\{ \boldsymbol\theta = (\theta_1,\theta_2,\dotsc,\theta_k): \theta_i \ge 0~\forall i, \sum_{1 \le i \le k} \theta_i=1, {\left[\frac{k^2C_{k,n}^2-n}{\left(n+kC_{k,n}^2\right)^2}+\frac{1}{n}\right] \sum_{1\le i \le k} \theta_i^2 \ge \frac{1}{n}-\frac{n-kC_{k,n}^2}{\left(n+kC_{k,n}\right)^2}}\right\}.
\end{equation}

This can be written more compactly as 

\begin{equation}
\left\{ \boldsymbol\theta = (\theta_1,\theta_2,\dotsc,\theta_k): \theta_i \ge 0~\forall i, \sum_{1 \le i \le k} \theta_i=1, { \sum_{1\le i \le k} \theta_i^2 \ge \frac{2n+(n+k)C_{k,n}}{2n+(k+kn)C_{k,n}}}\right\}.
\label{eq:riskset}
\end{equation}

 Recall that the parameter space is the simplex $E_k$

\begin{equation}
E_k\equiv \{ \boldsymbol \theta=(\theta_1,\theta_2,\dotsc,\theta_k): \theta_i\ge 0~\forall i, \sum_{1 \le i \le k} \theta_i=1\}
\label{def:ek}
\end{equation}

We will calculate/simulate the volume of the region \ref{eq:riskset} in $E_k$ for various choices of $C_{k,n}$. This gives the proportion of the parameter space that is better estimated (with regard to risk) by the MLE. 

It is seen (see Section 4) that even for fairly large sample sizes the Bayes estimator outperforms the MLE after $k$ is large, such as $k\ge 10$.

%%%%%%%%%%%%%%%%%%%%%%%%%%%%%%%%%%%%%%%%%%%%%%%%%%%%%%%%%%%%%%%
% Volume Calculations

\section{The proportion of the parameter space favoring the Bayes estimator: volume calculations} \label{sec:volume}

Before calculating the volume of the region \eqref{eq:riskset}, we will consider, more generally, the simplex $E_k$ defined in \eqref{def:ek}, and the region $\Omega_{k,R}\equiv \{ \boldsymbol\theta \in E_k: |\boldsymbol\theta|^2\le R\}$, where $R$ is a known constant. Note that the region \eqref{eq:riskset} is the complement of $\Omega_{k,R}$ (for a specific choice of $R$). Thus the region $\Omega_{k,R}$, when applied to the this problem, represents the region of the parameter space where the Bayes estimator has lower risk than the MLE. 

Define the point $\boldsymbol{e_0}$ by

\begin{equation}
\boldsymbol{e_0}\equiv \left(\frac{1}{k},\frac{1}{k},\dotsc,\frac{1}{k}\right),
\label{eq:e0}
\end{equation}

 the point in $E_k$ that is closest to the origin, with $\left|\boldsymbol{e_0}\right|^2=\frac{1}{k}$. We can see, then, that if $R < 1/k$ then $\Omega_{k,R}=\O$. Similarly, if $R \ge 1$ then $\Omega_{k,R}=E_k$, since $|\boldsymbol\theta|^2\le1~\forall \theta\in E_k$.

For $R\ge 1/k$, define $\delta_k(R)$ to be the distance between $\boldsymbol{e_0}$ and the sphere $\{ \boldsymbol\theta\in E_k: |\boldsymbol\theta|^2=R\}$. Then

\begin{equation}
\delta_k(R)=\sqrt{R-\frac{1}{k}}.
\label{eq:delta}
\end{equation}

Define $\nu_j$ to be the distance between $\boldsymbol{e_0}$ and the $(k-1-j)$-dimensional boundary of $E_k$. This is the distance between $\boldsymbol{e_0}$ and $\left(0,\dotsc,0,\frac{1}{k-j},\dotsc, \frac{1}{k-j}\right)$, which has the first $j$ coordinates equal to 0, and the remaining $k-j$ coordinates equal to $\frac{1}{k-j}$. We have

\begin{equation}
\nu_j=\sqrt{\frac{j}{k(k-j)}}.
\label{eq:nu}
\end{equation}

Note that we take $j=1,\dotsc,k-1$ and that $\nu_1<\nu_2<\dotsb<\nu_{k-1}$. Thus, for any $R\in \left(1/k,1\right)$, we can find $j$ such that $\nu_j\le \delta_k(R) < \nu_{j+1}$. We conjecture that the precise shape of $\Omega_{k,R}$, and thus the formula for calculating its surface area, should depend on which $j$ satifies this condition. We use the term ``surface area'' since $\Omega_{k,R}$ (and also $E_k$) is a $(k-1)$-dimensional subspace of $\rr^k$.

\subsection{The surface area of $\boldsymbol{E_k}$}

Consider in general the simplex $S_k(r)$, defined

\[
S_k(r)\equiv \{\boldsymbol\theta = (\theta_1,\theta_2,\dotsc,\theta_k):~ \theta_i\ge0~\forall i, \sum_{1\le i\le k}\theta_i \le r\}, ~r>0.
\]

Let $E_k(r)$ be the boundary of $S_k(r)$, namely,

\[
E_k(r)\equiv \{\boldsymbol\theta = (\theta_1,\theta_2,\dotsc,\theta_k):~ \theta_i\ge0~\forall i, \sum_{1\le i\le k}\theta_i = r\},
\]

and write $E_k\equiv E_k(1)$.

\begin{lemma}

(i) The volume $V_k(r)$ of $S_k(r)$ is $\frac{r^k}{k!}$, and (ii) the surface area $A_k(r)$ of $E_k(r)$ is $r^{k-1}\frac{\sqrt{k}}{(k-1)!}$.

In particular, the surface area of the simplex $E_k\equiv E_k(1)$ is 

\begin{equation}
A_k\left(\equiv A_k(1)\right)=\frac{\sqrt{k}}{(k-1)!}.
\label{eq:ekvol}
\end{equation}

\end{lemma}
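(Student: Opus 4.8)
The plan is to prove part (i) by induction on $k$ and then to obtain part (ii) from part (i) by a single change-of-variables computation.

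For (i), I would first record the scaling relation $S_k(r)=r\,S_k(1)$, which gives $V_k(r)=r^k V_k(1)$, so that it suffices to show $V_k(1)=1/k!$. This I would prove by induction on $k$: the base case $k=1$ is $V_1(1)=|[0,1]|=1=1/1!$. For the inductive step, slice $S_k(1)$ by fixing the last coordinate $\theta_k=t\in[0,1]$; the slice is a translated copy of $S_{k-1}(1-t)$, so Fubini's theorem together with the induction hypothesis gives
\[
V_k(1)=\int_0^1 V_{k-1}(1-t)\,dt=\int_0^1 \frac{(1-t)^{k-1}}{(k-1)!}\,dt=\frac{1}{k!}.
\]
Combined with the scaling relation this yields $V_k(r)=r^k/k!$.

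For (ii), I would realize $E_k(r)$ as the graph of the affine function $g(\theta_1,\dotsc,\theta_{k-1})=r-\theta_1-\dotsb-\theta_{k-1}$ over the domain $S_{k-1}(r)\subset\rr^{k-1}$. Indeed the orthogonal projection $(\theta_1,\dotsc,\theta_k)\mapsto(\theta_1,\dotsc,\theta_{k-1})$ maps $E_k(r)$ bijectively onto $S_{k-1}(r)$, because on $E_k(r)$ the constraints $\theta_i\ge0$ for $i\le k-1$ together with $\theta_k=r-\sum_{i\le k-1}\theta_i\ge0$ are exactly the defining inequalities of $S_{k-1}(r)$. The $(k-1)$-dimensional surface-area element of such a graph is $\sqrt{1+|\nabla g|^2}\,d\theta_1\dotsb d\theta_{k-1}$; equivalently, the parametrization $\theta\mapsto(\theta,g(\theta))$ has derivative whose columns are the vectors $e_i-e_k$, so its Gram matrix is $I_{k-1}+\mathbf{1}\mathbf{1}^{\top}$ with determinant $k$. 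Either way the area-magnification factor is the constant $\sqrt{k}$, so
\[
A_k(r)=\sqrt{k}\,V_{k-1}(r)=\sqrt{k}\,\frac{r^{k-1}}{(k-1)!},
\]
and setting $r=1$ gives \eqref{eq:ekvol}.

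The steps above are routine; the only places needing a little care are checking that the projection in (ii) is genuinely a bijection onto the lower-dimensional simplex whose volume was computed in (i), and correctly evaluating the Gram determinant (or invoking the surface-area formula for a graph of a function of $k-1$ variables). An alternative route to (ii) that sidesteps the graph formula is to observe that $S_k(r)$ is the cone over the base $E_k(r)$ with apex at the origin, whose distance to the hyperplane $\{\sum_i\theta_i=r\}$ is $r/\sqrt{k}$; the $k$-dimensional pyramid-volume formula $V_k(r)=\tfrac{1}{k}A_k(r)\cdot\tfrac{r}{\sqrt{k}}$ then recovers the same value of $A_k(r)$ using part (i). I do not anticipate any real obstacle along either path.
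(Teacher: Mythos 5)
Your argument is correct, and for part (ii) it takes a genuinely different route from the paper. For (i) you and the paper do essentially the same thing: the paper evaluates $V_k(r)$ by iterated integration, peeling off one coordinate at a time, which is exactly your Fubini slicing phrased as an induction (your added scaling observation $V_k(r)=r^kV_k(1)$ is harmless but not needed). For (ii), however, the paper differentiates the volume: it computes $V_k(r+\Delta r)-V_k(r)$, notes that the slab $S_k(r+\Delta r)\setminus S_k(r)$ has constant thickness $\Delta r/\sqrt{k}$ because the unit normal to $E_k(r)$ is $\bigl(1/\sqrt{k},\dotsc,1/\sqrt{k}\bigr)$, and takes the limit $A_k(r)=\lim_{\Delta r\to 0}\sqrt{k}\,\bigl(V_k(r+\Delta r)-V_k(r)\bigr)/\Delta r$. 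You instead realize $E_k(r)$ as the graph of $g(\theta_1,\dotsc,\theta_{k-1})=r-\sum_{i\le k-1}\theta_i$ over $S_{k-1}(r)$ and read off the constant area-magnification factor $\sqrt{1+|\nabla g|^2}=\sqrt{k}$ (equivalently the Gram determinant $\det(I_{k-1}+\mathbf{1}\mathbf{1}^{\top})=k$), giving $A_k(r)=\sqrt{k}\,V_{k-1}(r)$ directly; your pyramid-formula alternative $V_k(r)=\tfrac1k A_k(r)\cdot r/\sqrt{k}$ is a third valid route. The trade-off is minor: the paper's slab argument needs only the volume formula in dimension $k$ plus the normal direction, but relies on an infinitesimal-thickness limit that is stated somewhat informally; your graph computation avoids any limiting argument and makes the $\sqrt{k}$ factor appear as an explicit Jacobian, at the small cost of invoking the surface-area formula for graphs (or the Gram determinant) and of checking, as you do, that the projection onto the first $k-1$ coordinates is a bijection onto $S_{k-1}(r)$. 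Both yield \eqref{eq:ekvol}.
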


\begin{proof}
\begin{enumerate}[(i)]
\item 

\begin{align*}
V_k(r)&=\int_{S_k(r)}d\theta_1d\theta_2\dotsb d\theta_k\\
&=\int_{S_{k-1}(r)} \left(r-\sum_{1\le i\le k-1} \theta_i\right)d\theta_1 d\theta_2\dotsb d\theta_{k-1}\\ 
&=\int_{S_{k-2}(r)} \frac{\left(r-\sum_{1\le i\le k-2} \theta_i\right)^2}{2} d\theta_1 d\theta_2\dotsb d\theta_{k-2}\\
&=\dotsb\\
&= \int_{S_1(r)} \frac{(r-\theta_1)^{k-1}}{(k-1)!} d\theta_1\\
&=\frac{r^k}{k!}. 
\end{align*}

\item The difference in volume between $S_k(r)$ and $S_k(r+\Delta r)$ is a slab around $E_k(r)$. Note that

\begin{align*}
V_k(r+\Delta r)-V_k(r)&=\Delta r \left[\frac{d}{dr} \left(\frac{r^k}{k!}\right)\right]+o(\Delta r)\\
&= \frac{(\Delta r) r^{k-1}}{(k-1)!} \quad\text{as } \Delta r\searrow 0.\\
\end{align*}

The unit normal to the surface $E_k(r)$  at every point on it is $(\text{grad} \sum_{1\le i\le k}\theta_i)/|\text{grad} \sum_{1\le i\le k}\theta_i|=\left(\frac{1}{\sqrt{k}},\frac{1}{\sqrt{k}},\dotsc,\frac{1}{\sqrt{k}}\right)$. Hence at every point the thickness of the slab $S_k(r+\Delta r) \setminus S_k(r)$ is $\Delta r/\sqrt{k}$. One may also see this by computing the distance between $E_k(r)$ and $E_k(r+\Delta r)$ along the normal through the origin, i.e. $\left|\left(\frac{r}{k},\frac{r}{k},\dotsc,\frac{r}{k}\right)-\left(\frac{r+\Delta r}{k},\frac{r+\Delta r}{k},\dotsc,\frac{r+\Delta r}{k}\right)\right|$.

The surface area $A_k(r)$ then is given by

\begin{align*}
A_k(r)&=\lim_{\Delta r\to 0} \frac{V_k(r+\Delta r) - V_k(r)}{\Delta r/\sqrt{k}}\\
&=r^{k-1}\frac{\sqrt{k}}{(k-1)!}~.\\
\end{align*}
\end{enumerate}
\end{proof}

\subsection{The Surface Area of $\boldsymbol{\Omega_{2,R}}$}\label{sub:omega2}

Let us calculate the $k=2$ case (this corresponds to the binomial distribution, a special case of the multinomial distribution with $k=2$). The simplex $E_2=\{\boldsymbol\theta\in \rr^2:~ \theta_1,\theta_2\ge 0,~ \theta_1+\theta_2=1\}$ is the line between $(0,1)$ and $(1,0)$. The region of interest is $\Omega_{2,R}=\{\boldsymbol\theta\in E_2:~ \theta_1^2+\theta_2^2\le R\}$. See Figure \ref{fig:k2} for an illustration of this region.

\begin{figure}[h!]
\centering

\begin{tikzpicture}[scale=4]

\draw[thick, ->] (0,0) -- (1.25,0) node[anchor=north west] {$\theta_1$};
\draw[thick, ->] (0,0) -- (0,1.25) node[anchor=south east] {$\theta_2$};
\foreach \x in {0,1}
   \draw (\x cm,1pt) -- (\x cm,-1pt) node[anchor=north] {$\x$};
\foreach \y in {0,1}
    \draw (1pt,\y cm) -- (-1pt,\y cm) node[anchor=east] {$\y$};
\draw[->] (0,0) -- ({.49 + .49*sqrt(2*0.8^2-1)},{.49 - .49*sqrt(2*0.8^2-1)}) node[below= 3pt, left=23pt] {$\sqrt{R}$};
\draw[very thick] (0,1) -- (1,0);
\draw[very thick, dashed] (0.8,0) arc (0:90:0.8 cm);
\draw[line width = 2pt, yellow, opacity=.5] ({.5 - .5*sqrt(2*0.8^2-1)},{.5 + .5*sqrt(2*0.8^2-1)}) -- ({.5 + .5*sqrt(2*0.8^2-1)},{.5 - .5*sqrt(2*0.8^2-1)});
\filldraw ({.5 - .5*sqrt(2*0.8^2-1)},{.5 + .5*sqrt(2*0.8^2-1)}) circle (.5pt) node[above right] {$p_1$};
\filldraw ({.5 + .5*sqrt(2*0.8^2-1)},{.5 - .5*sqrt(2*0.8^2-1)}) circle (.5pt) node[above right] {$p_2$};
\end{tikzpicture}

\caption{An illustration of $E_2$. The region $\Omega_{2,R}$ is the line sement between $p_1$ and $p_2$.}
\label{fig:k2}
\end{figure}
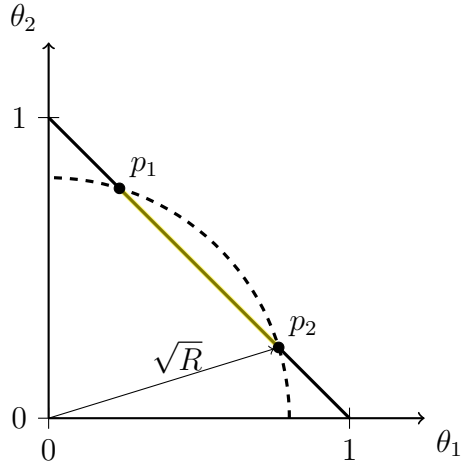

We find the intersection points by solving $\theta_1^2+(1-\theta_1)^2=R$ to obtain the points 

\[
p_1=\left(\frac{1}{2}-\frac{1}{2}\sqrt{2R-1},\frac{1}{2}+\frac{1}{2}\sqrt{2R-1}\right)
\]

and 

\[
p_2=\left(\frac{1}{2}+\frac{1}{2}\sqrt{2R-1},\frac{1}{2}-\frac{1}{2}\sqrt{2R-1}\right).
\]
 
The surface area of $\Omega_{2,R}$ is the length of the line segment between these two points, which is $\sqrt{4R-2}$. 

We can also find $\delta_2(R)=\sqrt{R-1/2}$ and $\nu_1=\sqrt{1/2}$ using equations \eqref{eq:delta} and \eqref{eq:nu}, respectively. These distances are pictured in Figure \ref{fig:delta2}. Note that we also have that the 1-dimensional volume of $\Omega_{2,R}$ is equal to $2\delta_2(R)=2\sqrt{R-1/2}=\sqrt{4R-1}$.

\begin{figure}[h!]
\centering

\begin{tikzpicture}[scale=4]

\draw[thick, ->] (0,0) -- (1.25,0) node[anchor=north west] {$\theta_1$};
\draw[thick, ->] (0,0) -- (0,1.25) node[anchor=south east] {$\theta_2$};
\foreach \x in {0,1}
   \draw (\x cm,1pt) -- (\x cm,-1pt) node[anchor=north] {$\x$};
\foreach \y in {0,1}
    \draw (1pt,\y cm) -- (-1pt,\y cm) node[anchor=east] {$\y$};
\draw[very thick] (0,1) -- (1,0);
\draw[very thick, dashed] (0.8,0) arc (0:90:0.8 cm);
\draw[line width = 2pt, yellow, opacity=.5] ({.5 - .5*sqrt(2*0.8^2-1)},{.5 + .5*sqrt(2*0.8^2-1)}) -- ({.5 + .5*sqrt(2*0.8^2-1)},{.5 - .5*sqrt(2*0.8^2-1)});
\filldraw (.5,.5) circle (.5pt) node[below left] {$e_0$};
\draw[<->, blue] ({.55 - .5*sqrt(2*0.8^2-1)},{.53 + .5*sqrt(2*0.8^2-1)}) -- node[above right] {$\delta_2(R)$} (.55,.53);
\draw[<->, red] (.55,.53) -- node[above right] {$\nu_1$} (.99,.09);
\end{tikzpicture}

\caption{An illustration of $\Omega_{2,R}$ with the distances $\delta_2(R)$ and $\nu_1$ labeled.}
\label{fig:delta2}
\end{figure}
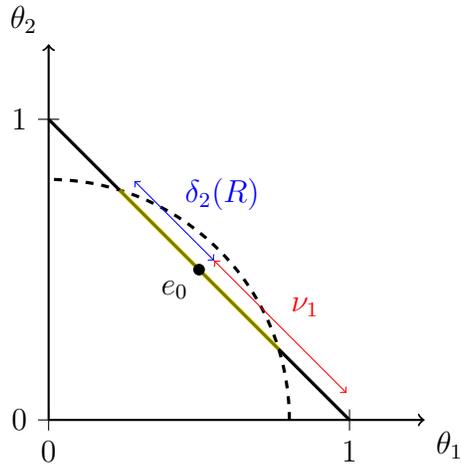

The length of the line $E_2$ is $\sqrt{2}$, giving that the proportion of the 1-dimensional volume of $ E_2$ made up by $\Omega_{2,R}$ is 

\begin{align*}
\frac{\vol(\Omega_{2,R})}{\vol(E_2)}&=\frac{\sqrt{4R-2}}{\sqrt{2}}\\
&=\sqrt{2R-1}
\end{align*}

\subsection{The surface area of $\boldsymbol{\Omega_{3,R}}$}\label{sub:omega3}

For $k=3$, we can also calculate this volume exactly. The simplex $E_3=\{\boldsymbol\theta\in \rr^3:~ \theta_1,\theta_2, \theta_3\ge 0,~ \theta_1+\theta_2+\theta_3=1\}$ is an equilateral triangle between the points $(1,0,0)$, $(0,1,0)$, and $(0,0,1)$. See Figure \ref{fig:smallrad} for an illustration of the space for $R\in \left(\frac{1}{3},\frac{1}{2}\right)$ and Figure \ref{fig:largerad} for an illustration of the space for $R\in \left(\frac{1}{2},1\right)$.

\begin{figure}[h]
\centering
\includegraphics[width=.6\textwidth]{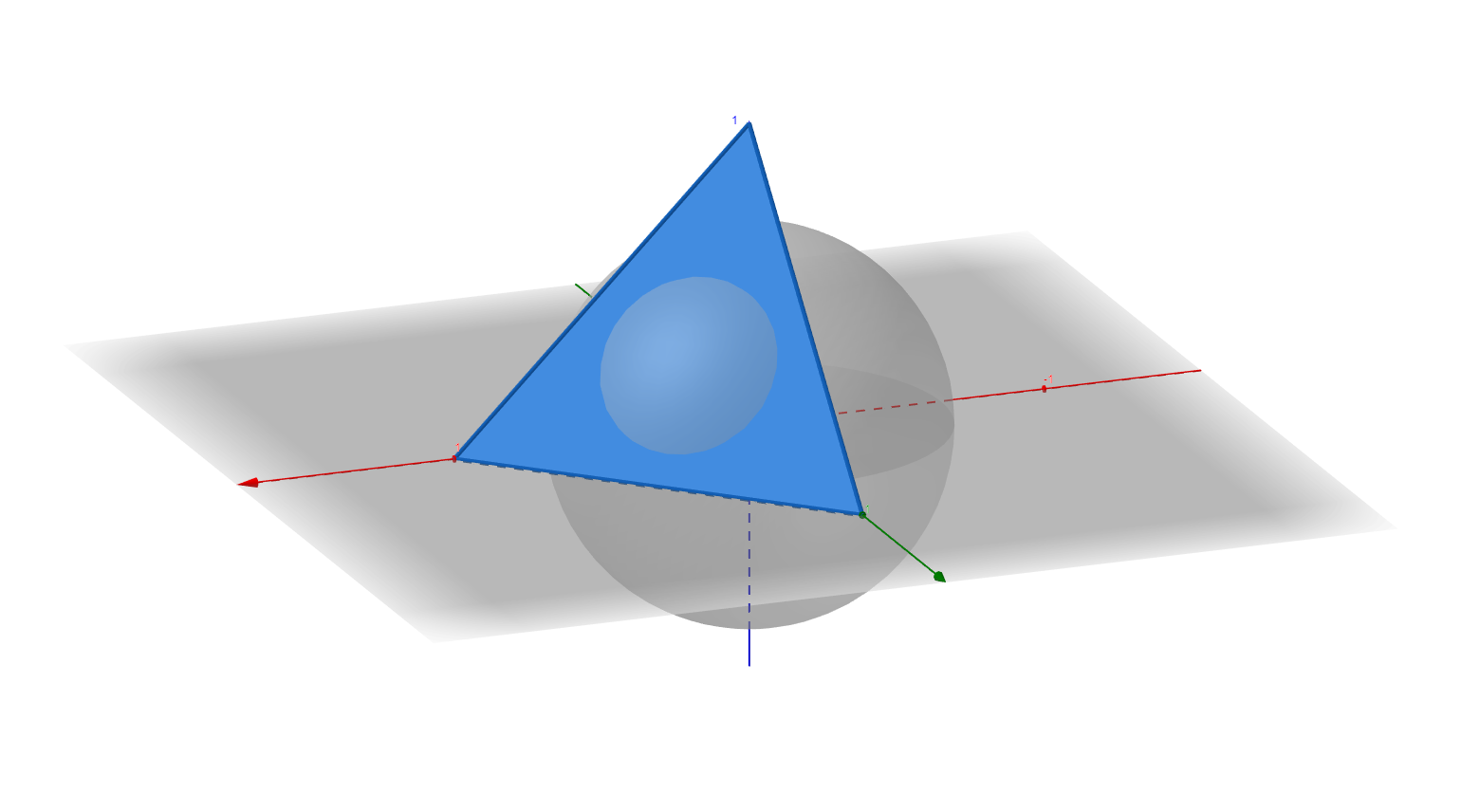}
\caption{An illustration of $E_3$ with the region $\Omega_{3,R}$ in gray for $R\in \left(\frac{1}{3},\frac{1}{2}\right)$.}
\label{fig:smallrad}
\end{figure}

\begin{figure}[h]
\centering
\includegraphics[width=.6\textwidth]{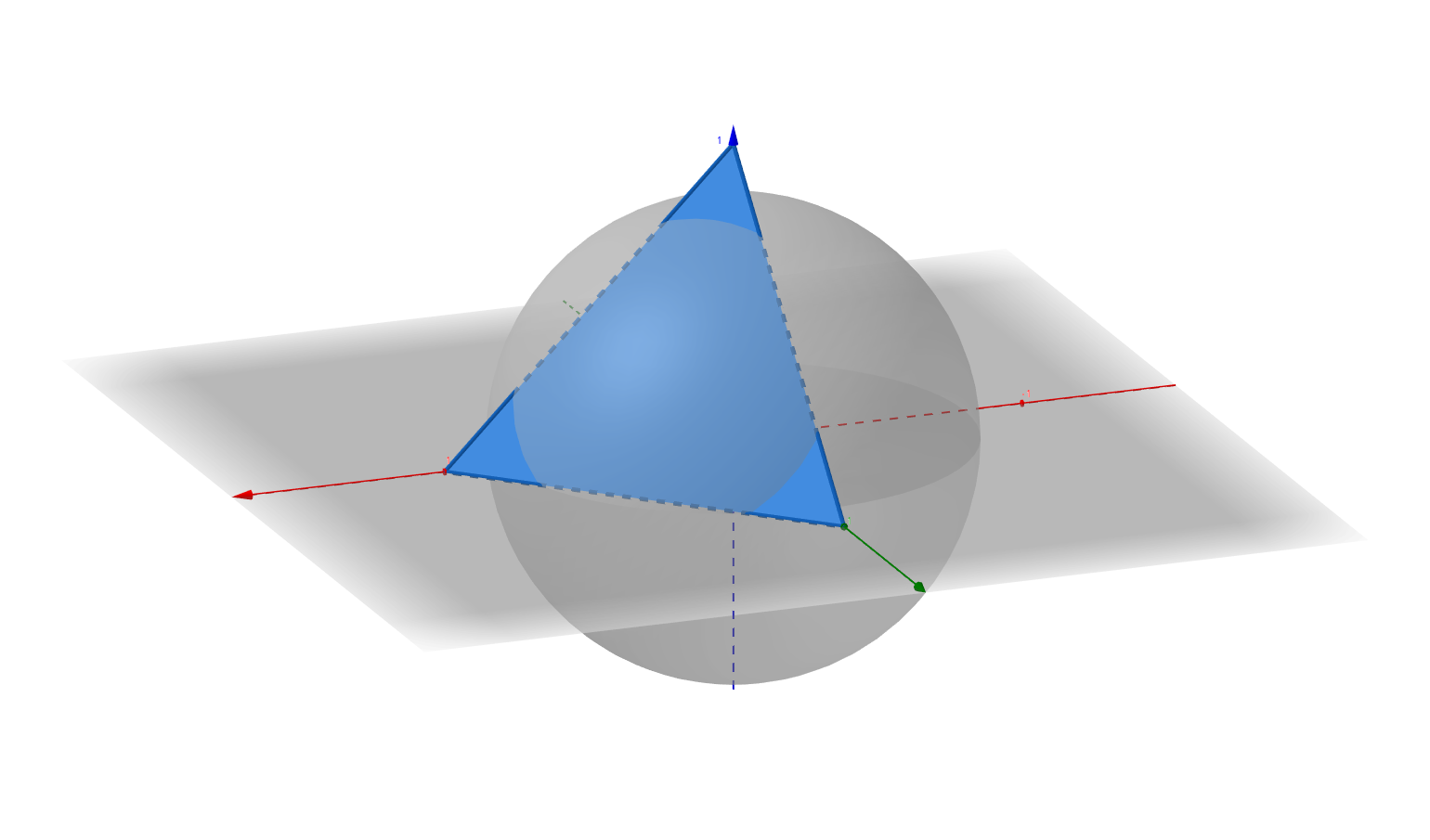}
\caption{An illustration of $E_3$ with the region $\Omega_{3,R}$ in gray for $R\in \left(\frac{1}{2},1\right)$.}
\label{fig:largerad}
\end{figure}

We calculate $\nu_1$ and $\nu_2$ using equation \eqref{eq:nu} and $\delta_3(R)$ using equation \eqref{eq:delta}. These are illustrated in Figure \ref{fig:delta3}.

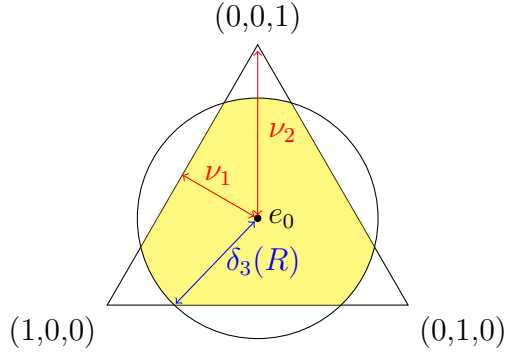
\begin{figure}[h]
\centering

\begin{tikzpicture}[scale=4]

\draw (0,0) node[below left] {(1,0,0)} -- (1,0) node[below right] {(0,1,0)} -- (.5, {sqrt(3)/2}) node[above] {(0,0,1)} -- cycle;

\begin{scope}
	\clip (0,0) -- (1,0) -- (.5, {sqrt(3)/2}) -- cycle;
	\fill[yellow, opacity = .5]   (.5, {sqrt(3)/6}) circle (.4cm);
\end{scope}
\draw (.5, {sqrt(3)/6}) circle (.4cm) ;
\filldraw (.5, {sqrt(3)/6}) circle (.3pt) node[right] {$e_0$} ;
\draw[<->,blue] (.49, {sqrt(3)/6-.05/sqrt(23)}) -- node[right] {$\delta_3(R)$}  ({(15-sqrt(69))/30+.001*sqrt(23)},0.005  );
\draw[<->,red] (.49, {sqrt(3)/6+sqrt(3)/300}) -- node[above] {$\nu_1$} (.25, {sqrt(3)/4});
\draw[<->,red] (.5, {sqrt(3)/6+.01}) -- node[right] {$\nu_2$} (.5, {sqrt(3)/2-.02});
\end{tikzpicture}

\caption{An illustration of $\Omega_{3,R}$ with $\nu_1 < \delta_3(R) < \nu_2$.}
\label{fig:delta3} 
\end{figure}

\begin{align*}
\nu_1&=\sqrt{\left(\frac{1}{3}\right)^2+2\left(\frac{1}{3}-\frac{1}{2}\right)^2}\\
&=\frac{1}{\sqrt{6}}~,\\
\nu_2&=\sqrt{2\left(\frac{1}{3}\right)^2+\left(\frac{1}{3}-1\right)^2}\\
&=\sqrt{\frac{2}{3}}~,
\end{align*}

and

\[
\delta_3(R)=\sqrt{R-\frac{1}{3}}~.
\]

If $\delta_3(R)\le \nu_1$, then $\Omega_{3,R}$ is just a circle with radius $\delta_3(R)$. Its surface area is then $\pi\left[\delta_3(R)\right]^2=\pi\left(R-\frac{1}{3}\right)$. The surface area of $E_3$ is (using equation \eqref{eq:ekvol}) $A_3=\sqrt{3}/2$. This gives that the proportion of the 2-dimensional volume of $E_3$ made up by $\Omega_{3,R}$ is 

\begin{align*}
\frac{\vol(\Omega_{3,R})}{\vol(E_3)}&=\frac{\pi\left(R-\frac{1}{3}\right)}{\sqrt{3}/2}\\
&=\frac{2\sqrt{3}}{3}\pi\left(R-\frac{1}{3}\right).\\
\end{align*}

If $\nu_1 < \delta_3(R) < \nu_2$ then we can divide up the region as in Figure \ref{fig:vol3} to determine the surface area.

\begin{figure}[h]
\centering

\begin{tikzpicture}[scale=4]

\draw (0,0) node[below left] {(1,0,0)} -- (1,0) node[below right] {(0,1,0)} -- (.5, {sqrt(3)/2}) node[above] {(0,0,1)} -- cycle;

\begin{scope}
	\clip (0,0) -- (1,0) -- (.5, {sqrt(3)/2}) -- cycle;
	\fill[yellow, opacity = .5]   (.5, {sqrt(3)/6}) circle (.4cm);
\end{scope}
\draw (.5, {sqrt(3)/6}) circle (.4cm) ;
\filldraw (.5, {sqrt(3)/6}) circle (.3pt) node[below, font=\tiny] {$e_0\equiv\left(\frac{1}{3},\frac{1}{3},\frac{1}{3}\right)$};
\draw[dashed, thin] (.5, {sqrt(3)/6}) -- ({(15-sqrt(69))/30},0  );
\draw[dashed, thin] (.5, {sqrt(3)/6}) -- ({(15+sqrt(69))/30},0  );
\draw[dashed, thin] (.5, {sqrt(3)/6}) -- ({(15-sqrt(69))/60},{sqrt(3)*(15-sqrt(69))/60}  );
\draw[dashed, thin] (.5, {sqrt(3)/6}) -- ({(45+sqrt(69))/60},{sqrt(3)*(15-sqrt(69))/60}  );
\draw[dashed, thin] (.5, {sqrt(3)/6}) -- ({(15+sqrt(69))/60},{sqrt(3)*(15+sqrt(69))/60}  );
\draw[dashed, thin] (.5, {sqrt(3)/6}) -- ({(45-sqrt(69))/60},{sqrt(3)*(15+sqrt(69))/60}  );

\filldraw[dashed,draw=black, fill=blue, opacity=.5] 
	({(52.5-sqrt(69))/30},.2  ) 
	-- ({(52.5+sqrt(69))/30},.2  ) 
	-- (1.75, {sqrt(3)/6 + .2})   
	-- cycle;
\filldraw (1.75, {sqrt(3)/6 + .2}) circle (.2pt) node[above, font=\tiny] {$e_0\equiv\left(\frac{1}{3},\frac{1}{3},\frac{1}{3}\right)$};
\filldraw (1.75,.2) circle (.2pt) node[above, font=\tiny] {$\left(\frac{1}{2},\frac{1}{2},0\right)$};
\draw[<->] ({(52.5-sqrt(69))/30},.15  ) -- ({(52.5+sqrt(69))/30},.15  );
\draw (1.75, .15) node[below, font=\tiny] {$\sqrt{4R-2}$};
\draw[->,blue] (.5,.05) .. controls +(up:.1cm) and +(left:.1cm) .. ({(52.5-sqrt(69))/30},.3  );

\filldraw[dashed, draw=black, fill=red, opacity=.5]
	({(-15-sqrt(69))/30},.2  )
	-- (-.5, {sqrt(3)/6+.2})
	-- ({(-45-sqrt(69))/60},{sqrt(3)*(15-sqrt(69))/60+.2}  )
	arc[start angle = {180+acos(5/8+sqrt(69)/24}, end angle = {180 + acos(sqrt(69)/12)}, radius = .4]
	-- cycle;
\filldraw (-.5, {sqrt(3)/6+.2}) circle (.2pt) node[above right, font=\tiny] {$e_0\equiv\left(\frac{1}{3},\frac{1}{3},\frac{1}{3}\right)$};
\draw[<->] ({(-15-sqrt(69))/30+.02},.18  ) -- node[right=.5pt,font=\tiny] {$\sqrt{R-\frac{1}{3}}$} (-.48, {sqrt(3)/6+.18});
\draw[->,red] (.22,.1) .. controls +(up:.1cm) and +(right:.1cm) .. ({-.2-sqrt(69)/30},.2);
\draw (-.5, {sqrt(3)/6+.2}) node[below=6pt, left=4pt, font=\tiny] {$\phi$};

\end{tikzpicture}

\caption{A diagram of how to divide $\Omega_{3,R}$ to determine its surface area.}
\label{fig:vol3}
\end{figure}
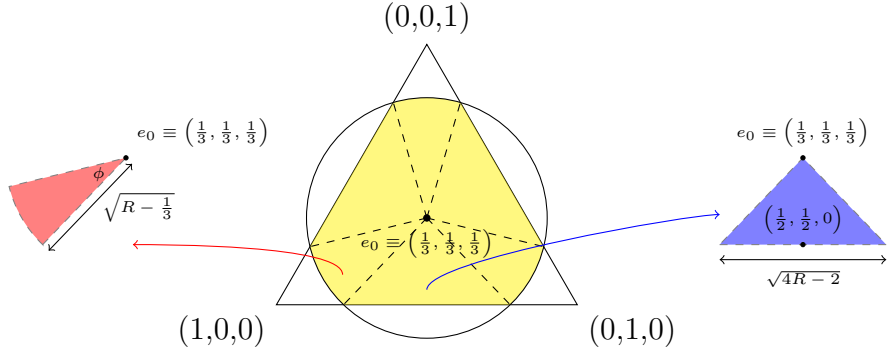

\sloppy The triangles each have area $\frac{1}{2}\frac{1}{\sqrt{6}}\sqrt{4R-2}=\frac{1}{2}\sqrt{\frac{2R-1}{3}}$. The circular sectors have radius $\delta_3(R)=\sqrt{R-1/3}$. The angle $\phi$ is the angle between the vectors $\left(\frac{1}{6}+\frac{1}{2}\sqrt{2R-1}, -\frac{1}{3} , \frac{1}{6}-\frac{1}{2}\sqrt{2R-1}\right)$ and $\left(\frac{1}{6}+\frac{1}{2}\sqrt{2R-1},  \frac{1}{6}-\frac{1}{2}\sqrt{2R-1},-\frac{1}{3} \right)$. We have

\[
\cos\phi=\frac{\frac{1}{4}(2R-1)+\frac{1}{2}\sqrt{2R-1}-\frac{1}{12}}{R-\frac{1}{3}}.
\]

The surface area of $\Omega_{3,R}$ is then

\[
\vol(\Omega_{3,R})=\frac{3}{2}\sqrt{\frac{2R-1}{3}}+\frac{3}{2}\left(R-\frac{1}{3}\right)\arccos\left(\frac{\frac{1}{4}(2R-1)+\frac{1}{2}\sqrt{2R-1}-\frac{1}{12}}{R-\frac{1}{3}}\right).
\]

The proportion of the 2-dimensional volume of $E_3$ made up by $\Omega_{3,R}$ is then 

\[
\frac{\vol(\Omega_{3,R})}{\vol(E_3)}=\sqrt{2R-1}+\sqrt{3}\left(R-\frac{1}{3}\right)\arccos\left(\frac{\frac{1}{4}(2R-1)+\frac{1}{2}\sqrt{2R-1}-\frac{1}{12}}{R-\frac{1}{3}}\right).
\]

We can bound the surface area of $\Omega_{3,R}$ from below by cutting out triangles in the corners. This is done by drawing a straight line between the intersections of the sphere and the edges with the same value in one of the coordinates. There are six places where the sphere intersects the edges of $E_3$. If we let $\theta'=\frac{1}{2}+\frac{1}{2}\sqrt{2R-1}$, these solutions are $\boldsymbol a=(\theta',1-\theta',0)$, $\boldsymbol b=(\theta',0,1-\theta')$, $\boldsymbol c=(1-\theta',0,\theta')$, $\boldsymbol d=(0,1-\theta',\theta')$, $\boldsymbol e=(0,\theta',1-\theta')$, and $\boldsymbol f=(1-\theta',\theta',0)$. Draw lines between $\boldsymbol a$ and $\boldsymbol b$, $\boldsymbol c$ and $\boldsymbol d$, and $\boldsymbol e$ and $\boldsymbol f$ (see Figure \ref{fig:approx}).

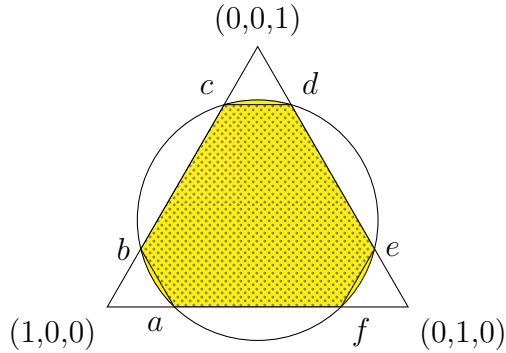
\begin{figure}[h]
\centering

\begin{tikzpicture}[scale=4]

\begin{scope}
	\clip (0,0) -- (1,0) -- (.5, {sqrt(3)/2}) -- cycle;
	\fill[yellow]   (.5, {sqrt(3)/6}) circle (.4cm);
\end{scope}

\begin{scope}
	\clip ({(15-sqrt(69))/30},0  ) --  ({(15-sqrt(69))/60},{sqrt(3)*(15-sqrt(69))/60}  ) --  ({(15+sqrt(69))/60},{sqrt(3)*(15+sqrt(69))/60}  )
	-- ({(45-sqrt(69))/60},{sqrt(3)*(15+sqrt(69))/60}  ) -- ({(45+sqrt(69))/60},{sqrt(3)*(15-sqrt(69))/60}  )
	-- ({(15+sqrt(69))/30},0  ) -- cycle;
	\draw[pattern = crosshatch dots, pattern color = gray] (0,0) -- (1,0) -- (.5, {sqrt(3)/2}) -- cycle;
\end{scope}

\draw ({(15-sqrt(69))/30},0  ) node[below left] {$a$} --  ({(15-sqrt(69))/60},{sqrt(3)*(15-sqrt(69))/60}  ) node[left] {$b$};
\draw ({(15+sqrt(69))/60},{sqrt(3)*(15+sqrt(69))/60}) node[above left] {$c$}  -- ({(45-sqrt(69))/60},{sqrt(3)*(15+sqrt(69))/60}) node[above right] {$d$};
\draw   ({(45+sqrt(69))/60},{sqrt(3)*(15-sqrt(69))/60}) node[right] {$e$}  -- ({(15+sqrt(69))/30},0  ) node[below right] {$f$};

\draw (.5, {sqrt(3)/6}) circle (.4cm) ;

\draw (0,0) node[below left] {(1,0,0)} -- (1,0) node[below right] {(0,1,0)} -- (.5, {sqrt(3)/2}) node[above] {(0,0,1)} -- cycle;
\end{tikzpicture}

\caption{How to bound the surface area of $\Omega_{3,R}$ from below using similar triangles. $\Omega_{3,R}$ is shaded as before and the lower bound area is crosshatched.}
\label{fig:approx}
\end{figure}

Due to the convexity of the sphere, the lines drawn are entirely inside the sphere. Thus the set $\Omega_{3,R}'$, which is $E_k$ without the three triangles, is entirely contained in $\Omega_{3,R}$. This gives

\begin{equation}
\vol(\Omega_{3,R}) \ge \vol(\Omega_{3,R}').
\label{eq:approx3}
\end{equation}

The triangles in the corners that are removed are equilateral triangles, with side length $\sqrt{R-\sqrt{2R-1}}$. They are thus similar to $E_3$, which is equilateral with side length $\sqrt{2}$. The ratio of the areas of the small triangles to $E_3$ is the ratio of the squared side lengths, which is $\frac{R-\sqrt{2R-1}}{2}$. This gives, finally,

\begin{equation}
\frac{\vol(\Omega_{3,R})}{\vol(E_3)}\ge 1-\frac{3R-3\sqrt{2R-1}}{2}.
\label{eq:volbound3}
\end{equation}  

\subsection{The surface area of $\Omega_{k,R}$ for $k\ge 4$}\label{sub:omegaGE4}

Calculating this surface area explicitly appears to be an open problem. We have not found a formula, but can approximate the area with a fairly sharp lower bound for certain choices of $R$. We will generalize the ``cutting off corners'' method in the $k=3$ case, which is valid for $R$ such that $\nu_{k-2}<\delta_k(R)<\nu_{k-1}$.

\begin{lemma}[The Surface Area of $\Omega_{k,R}$ for $R\in [1/2,1)$]
Let $E_k$ be the standard $k$-simplex (defined in equation \eqref{def:ek}) and $\Omega_{k,R}$ be the region $\{ \boldsymbol\theta \in E_k: |\boldsymbol\theta|^2\le R\}$. Assume that $R\in [1/2,1)$. Then 

\begin{equation}
\frac{\vol(\Omega_{k,R})}{\vol(E_k)}\ge 1-k\left(\frac{R-\sqrt{2R-1}}{2}\right)^\frac{k-1}{2}.
\label{eq:volbound}
\end{equation}

Additionally, the proportion of $E_k$ made up by $\Omega_{k,R}$ approaches 1 as $k\to\infty$.

\label{lem:sa}
\end{lemma}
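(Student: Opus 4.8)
The plan is to generalise the ``cutting off corners'' construction used for $k=3$: I would inscribe inside $\Omega_{k,R}$ an explicit convex polytope $P$ obtained by slicing a small similar copy of $E_k$ off each of the $k$ vertices of the simplex, compute $\vol(P)/\vol(E_k)$ exactly, and use $\vol(\Omega_{k,R})\ge\vol(P)$. Throughout, $\bb{e_i}$ denotes the $i$-th standard basis vector, which is the $i$-th vertex of $E_k$, and all volumes are $(k-1)$-dimensional.

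First, locate the cut. For $R\in[1/2,1)$ the sphere $\{\bb\theta\in E_k:|\bb\theta|^2=R\}$ meets the edge $[\bb{e_i},\bb{e_j}]$ of $E_k$ in exactly the two points $\theta'\bb{e_i}+(1-\theta')\bb{e_j}$ and $(1-\theta')\bb{e_i}+\theta'\bb{e_j}$, where $\theta'=\tfrac12+\tfrac12\sqrt{2R-1}\in[1/2,1)$ is the larger root of $2t^2-2t+1=R$ (one checks via \eqref{eq:nu} and \eqref{eq:delta} that $R\ge 1/2$ is precisely $\nu_{k-2}\le\delta_k(R)$ and $R<1$ is precisely $\delta_k(R)<\nu_{k-1}$, so this is the announced regime). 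Next, for each $i$ put $T_i:=E_k\cap\{\bb\theta:\theta_i\ge\theta'\}$. Writing $\bb\theta\in E_k$ as $\bb{e_i}+\sum_{j\ne i}\theta_j(\bb{e_j}-\bb{e_i})$, the condition $\theta_i\ge\theta'$ is equivalent to $\sum_{j\ne i}\theta_j\le1-\theta'$, so $T_i=\bb{e_i}+(1-\theta')(E_k-\bb{e_i})$ is the image of $E_k$ under the homothety of ratio $1-\theta'$ about $\bb{e_i}$, whence $\vol(T_i)=(1-\theta')^{k-1}\vol(E_k)$. Since $\theta'\ge1/2$, no two coordinates of a point of $E_k$ can both exceed $\theta'$, so the sets $E_k\cap\{\theta_i>\theta'\}$ are pairwise disjoint; hence $P:=E_k\cap\bigcap_i\{\theta_i\le\theta'\}=E_k\setminus\bigcup_i\bigl(E_k\cap\{\theta_i>\theta'\}\bigr)$ satisfies $\vol(P)=\bigl(1-k(1-\theta')^{k-1}\bigr)\vol(E_k)$.

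Then I would show $P\subseteq\Omega_{k,R}$. Since $P\subseteq E_k$ by construction, it suffices to show $\sum_i\theta_i^2\le R$ on $P$; as $\bb\theta\mapsto\sum_i\theta_i^2$ is convex, it attains its maximum over the polytope $P$ at a vertex, so it is enough to bound it at the vertices. A vertex of $P$ has $k-1$ of the constraints $\theta_i\ge0,\ \theta_i\le\theta'$ active together with $\sum_i\theta_i=1$; since $\theta'\in(0,1)$ the constraints $\theta_i=0$ and $\theta_i=\theta'$ cannot both be active, and a short case check using $\sum_i\theta_i=1$ and $1-\theta'\le\theta'$ shows that every vertex of $P$ equals $\theta'\bb{e_i}+(1-\theta')\bb{e_j}$ for some $i\ne j$, at which $\sum_i\theta_i^2=\theta'^2+(1-\theta')^2=R$. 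Hence $\vol(\Omega_{k,R})\ge\vol(P)=\bigl(1-k(1-\theta')^{k-1}\bigr)\vol(E_k)$. Finally $(1-\theta')^2=\bigl(\tfrac{1-\sqrt{2R-1}}{2}\bigr)^2=\tfrac{R-\sqrt{2R-1}}{2}$, so $(1-\theta')^{k-1}=\bigl(\tfrac{R-\sqrt{2R-1}}{2}\bigr)^{(k-1)/2}$, which is exactly \eqref{eq:volbound}; and since $1-\theta'=\tfrac12(1-\sqrt{2R-1})\in(0,1/2]$ for $R\in[1/2,1)$, we get $k(1-\theta')^{k-1}\to0$ as $k\to\infty$ (geometric decay beats the linear factor), so the left side of \eqref{eq:volbound}, which is always $\le1$, converges to $1$.

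The only step needing genuine (though still routine) care is the vertex analysis for $P$: verifying that truncating the simplex by the hyperplanes $\{\theta_i=\theta'\}$ creates no vertex lying off the sphere, so that convexity of the ball may be invoked to conclude $P\subseteq\Omega_{k,R}$. One should also separately note the endpoint $R=1/2$ (where $\theta'=1/2$ and the corner pieces $T_i$ abut along the lower-dimensional faces $\{\theta_i=\theta_j=1/2\}$), which is harmless for the volume bound.
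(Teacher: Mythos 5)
Your proposal is correct and follows essentially the same route as the paper: cut off a small simplex at each vertex of $E_k$ along the hyperplane through the points where the sphere $\{|\boldsymbol\theta|^2=R\}$ meets the edges (with $\theta'=\tfrac12+\tfrac12\sqrt{2R-1}$), observe the truncated polytope lies in $\Omega_{k,R}$ by convexity, and compare the corner simplices to $E_k$ by similarity — your homothety ratio $(1-\theta')^{k-1}$ is exactly the paper's edge-length ratio $\bigl(\frac{R-\sqrt{2R-1}}{2}\bigr)^{\frac{k-1}{2}}$. Your vertex-by-vertex verification that the truncated polytope sits inside the ball, and your disjointness remark for $\theta'\ge 1/2$, simply make rigorous what the paper asserts informally via convexity of the sphere.
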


\begin{proof}
Since $1/2 \le R < 1$, we have

\begin{align*}
\frac{1}{2} \le R < 1 &\Leftrightarrow \frac{1}{2} - \frac{1}{k} < R-\frac{1}{k} < 1 - \frac{1}{k}\\
&\Leftrightarrow \frac{k-2}{2k} \le R - \frac{1}{k} < \frac{k-1}{k}\\
&\Leftrightarrow \nu_{k-2}^2 \le \left[\delta_k(R)\right]^2 < \nu_{k-1}^2 \quad \text{(see equation \eqref{eq:nu})}\\
&\Leftrightarrow \nu_{k-2} \le \delta_k(R) < \nu_{k-1} 
\end{align*}

In this case, where $\nu_{k-2}\le\delta_k(R)<\nu_{k-1}$, there are intersections along the 1-dimensional edges of $E_k$ ($\boldsymbol \theta$ such that only two of its components are nonzero) with the sphere $\{\boldsymbol \theta\in \rr^k: |\boldsymbol \theta|^2=R\}$. 

Due to the convexity of the sphere, hyperplanes between these points will be contained inside the sphere. As in the $k=3$ case, we can form $k$ $(k-1)$-dimensional equilateral simplices. The $j$-th simplex will have as one of its vertices a single vertex from $E_k$ of the form $\theta_j=1$ and $\theta_i=0$ for $i\ne j$. Its remaining $k-1$ vertices will be of the form $\theta_j=\theta'$ and $\theta_i=1-\theta'$, with $i\in \{1,2,\dotsc, j-1, j+1,\dotsc, k\}$ (as in the $k=3$ case, we define $\theta'=\frac{1}{2}+\frac{1}{2}\sqrt{2R-1}$). These have edge length $\sqrt{R-\sqrt{2R-1}}$ and are similar to $E_k$ which has edge length $\sqrt{2}$. The ratio of the areas of the small simplices to $E_k$ is $\left(\frac{R-\sqrt{2R-1}}{2}\right)^\frac{k-1}{2}$. This gives the lower bound \eqref{eq:volbound}

\[
\frac{\vol(\Omega_{k,R})}{\vol(E_k)}\ge 1-k\left(\frac{R-\sqrt{2R-1}}{2}\right)^\frac{k-1}{2}.
\]

We see that $\frac{\vol(\Omega_{k,R})}{\vol(E_k)}$ approaches 1 as $k\to\infty$ as long as $\frac{R-\sqrt{2R-1}}{2}<1$. This is, in particular, true when $R\in [1/2,1)$.
\end{proof}

\subsection{Applying the volume calculations to Bayes estimators}\label{sub:estthm}

When using the prior $\dir\left(C_{k,n},\dotsc,C_{k,n}\right)$, we obtained the region \eqref{eq:riskset} where the MLE has lower risk than the Bayes estimator

\[
\left\{ \boldsymbol\theta = (\theta_1,\theta_2,\dotsc,\theta_k): \theta_i \ge 0~\forall i, \sum_{1 \le i \le k} \theta_i=1, { \sum_{1\le i \le k} \theta_i^2 \ge \frac{2n+(n+k)C_{k,n}}{2n+(k+kn)C_{k,n}}}\right\}.
\] 

The region where the \emph{Bayes estimator has lower risk than the MLE} (the complement of region \eqref{eq:riskset}) is $\Omega_{k,R}$ with $R$ defined by 

\begin{equation}
R=\frac{2n+(n+k)C_{k,n}}{2n+(k+kn)C_{k,n}}.
\label{eq:Bayesrad}
\end{equation}

We can then determine which choice of $C_{k,n}$ will yield the exponential convergence in Lemma \ref{lem:sa}. This gives the following theorem.

\begin{theorem}\label{thm:volbound}
Consider estimating the $k$-dimensional ($k\ge 3$) parameter $\boldsymbol \theta$ in the multinomial distribution based on a  simple random sample of size $n$. Under the prior $\dir\left(C_{k,n},\dotsc,C_{k,n}\right)$, the proportion of the parameter space where the Bayes estimator has lower risk than the MLE is greater than

\[
1-k\left(\frac{1}{4}\right)^{\frac{k-1}{2}} \quad\text{(for all } n\ge k), 
\]

for $C_{k,n}$ satisfying

\begin{equation}
C_{k,n}<\frac{2n}{n(k-2)-k}.
\label{eq:ckn}
\end{equation}
\end{theorem}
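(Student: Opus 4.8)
The plan is to deduce the statement directly from Lemma~\ref{lem:sa} through the correspondence \eqref{eq:Bayesrad}. Recall that the set of $\boldsymbol\theta$ on which the Bayes estimator has the smaller risk is exactly $\Omega_{k,R}$ with $R=\frac{2n+(n+k)C_{k,n}}{2n+(k+kn)C_{k,n}}$, and that Lemma~\ref{lem:sa} gives $\vol(\Omega_{k,R})/\vol(E_k)\ge 1-k\left(\frac{R-\sqrt{2R-1}}{2}\right)^{(k-1)/2}$ as soon as $R\in[1/2,1)$. So there are two things to check: first, that the hypothesis \eqref{eq:ckn} on $C_{k,n}$ (together with $n\ge k$ and $k\ge3$, and of course $C_{k,n}>0$ as required for a Dirichlet prior) forces $R\in[1/2,1)$; and second, that over this range of $R$ the Lemma~\ref{lem:sa} bound is itself $\ge 1-k(1/4)^{(k-1)/2}$.

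For the first point, $R<1$ is automatic for any $C_{k,n}>0$ and $k\ge2$: clearing the (positive) denominator, $R<1\iff (n+k)C_{k,n}<(k+kn)C_{k,n}\iff n+k<k+kn\iff (k-1)n>0$. For the lower bound, clearing denominators again gives $R\ge 1/2\iff 2n+\big(k-n(k-2)\big)C_{k,n}\ge 0$. Since $n\ge k\ge3$ we have $n(k-2)-k\ge k(k-3)\ge0$, with equality only in the exceptional case $k=n=3$; away from that case we may divide by the positive quantity $n(k-2)-k$ to see that $R\ge1/2$ is equivalent to $C_{k,n}\le \frac{2n}{n(k-2)-k}$, which is implied by \eqref{eq:ckn}. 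In the exceptional case $k=n=3$ the hypothesis \eqref{eq:ckn} is vacuous and $R\ge1/2$ holds outright, since then $2n+\big(k-n(k-2)\big)C_{k,n}=6>0$.

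For the second point, put $g(R)=R-\sqrt{2R-1}$ on $[1/2,1]$; then $g'(R)=1-(2R-1)^{-1/2}<0$ for $R\in(1/2,1)$, so $g$ is strictly decreasing and $g(R)\le g(1/2)=1/2$. Hence $\frac{R-\sqrt{2R-1}}{2}\le\frac14$, and because $t\mapsto t^{(k-1)/2}$ is increasing on $[0,\infty)$ this yields $\left(\frac{R-\sqrt{2R-1}}{2}\right)^{(k-1)/2}\le\left(\frac14\right)^{(k-1)/2}$; multiplying by $-k$ and adding $1$ turns the Lemma~\ref{lem:sa} estimate into $\vol(\Omega_{k,R})/\vol(E_k)\ge 1-k(1/4)^{(k-1)/2}$, which is the assertion.

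I do not anticipate a real obstacle: the substance is carried by Lemma~\ref{lem:sa} and by the identification of $R$ in \eqref{eq:Bayesrad}, and what remains is elementary. The only place demanding care is the algebra in the first step --- the direction of the inequalities on multiplying through by $C_{k,n}$ depends on $C_{k,n}>0$, and the passage from ``$R\ge1/2$'' to the explicit condition \eqref{eq:ckn} depends on the sign of $n(k-2)-k$, which is exactly why $n\ge k$ is assumed and why $k=n=3$ must be handled separately. It is also worth noting in passing that \eqref{eq:ckn} is satisfied for every $k\ge3$, $n\ge k$ by the choice $C_{k,n}=1/k$ relevant to the nonparametric application, since $\frac1k<\frac{2n}{n(k-2)-k}\iff 0<nk+2n+k$.
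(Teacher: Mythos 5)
Your proposal is correct and follows essentially the same route as the paper: identify the Bayes-favorable region as $\Omega_{k,R}$ with $R$ given by \eqref{eq:Bayesrad}, translate \eqref{eq:ckn} into $R\ge 1/2$ (with $R<1$ automatic), and then bound $R-\sqrt{2R-1}\le 1/2$ to pass from Lemma~\ref{lem:sa} to the stated constant $1-k(1/4)^{(k-1)/2}$. Your treatment is in fact slightly more careful than the paper's, since you verify the monotonicity of $R\mapsto R-\sqrt{2R-1}$ explicitly and handle the degenerate case $n(k-2)-k=0$ (i.e.\ $k=n=3$), which the paper's algebraic equivalence glosses over.
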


\begin{proof}
As noted above, the region where the Bayes estimator has lower risk than the MLE is $\Omega_{k,R}$ with $R$ defined by equation \eqref{eq:Bayesrad}

\[
R=\frac{2n+(n+k)C_{k,n}}{2n+(k+kn)C_{k,n}}.
\]

We have 

\begin{align*}
R > 1/2 &\Leftrightarrow \frac{2n+(n+k)C_{k,n}}{2n+(k+kn)C_{k,n}} > 1/2\\
&\Leftrightarrow C_{k,n}(k+2n-kn) > -2n \\
&\Leftrightarrow C_{k,n} < \frac{2n}{n(k-2)-k}.
\end{align*}

We can thus apply Lemma \ref{lem:sa}. Since $R>1/2$, we have

\begin{align*}
R > \frac{1}{2} &\Rightarrow R-\sqrt{2R-1} < \frac{1}{2}-\sqrt{\frac{2}{2}-1}\\
&\Rightarrow k\left(\frac{R-\sqrt{2R-1}}{2}\right)^\frac{k-1}{2} < k\left(\frac{1}{4}\right)^{\frac{k-1}{2}}\\
&\Rightarrow 1- k\left(\frac{R-\sqrt{2R-1}}{2}\right)^\frac{k-1}{2} > 1 - k\left(\frac{1}{4}\right)^{\frac{k-1}{2}}.
\end{align*}

Since $1 - k\left(\frac{1}{4}\right)^{\frac{k-1}{2}}\to 1$ as $k\to\infty$, we have proved that $\frac{\vol(\Omega_{k,R})}{\vol(E_k)}$ approaches 1 as $n,k\to\infty$.
\end{proof}

We can make this convergence tighter for a given $C_{k,n}$ with $R=\frac{2n+(n+k)C_{k,n}}{2n+(k+kn)C_{k,n}}>1/2$. Then we have that the proportion of the parameter space where this Bayes estimator has lower risk than the MLE is greater than 

\begin{equation}
1-k\left(\frac{R-\sqrt{2R-1}}{2}\right)^\frac{k-1}{2}.
\label{eq:volbound2}
\end{equation}

One may ask if there is a best choice of $C_{k,n}$. Note that the squared radius $R$ of the region where the Bayes estimator has lower risk ($\Omega_{k,R}$), defined in equation \eqref{eq:Bayesrad}, is a decreasing function of $C_{k,n}$. We have that $R$ approaches 1 as $C_{k,n}\searrow 0$. That is, the proportion of the parameter space where the Bayes estimator has lower risk approaches 1 (the whole space) as $C_{k,n}$ approaches 0. Note that we cannot take $C_{k,n}=0$ as the Dirichlet prior requires that $\alpha_i>0$ for all $i$. Indeed, if we could take $C_{k,n}$ to be identically zero, the Bayes estimator would be equal to the MLE! 

One could, however, use the formula for $R$ and the lower bound in \eqref{eq:volbound2} to select $C_{k,n}$ small enough to satisfy a desired level of coverage of the parameter space for a choice of $k$ (and any larger $k$). 

\begin{example} \label{ex:prior1-k}
One such choice of prior is $\dir\left(\frac{1}{k},\dotsc,\frac{1}{k}\right)$. This can be thought of as relating to using a base measure that is a probability measure, since $\sum_{1\le i \le k} \alpha_i=1$. The region where the Bayes estimator has lower risk than the MLE is $\Omega_{k,R}$ with $R$ defined by 

\[
R=\frac{2n+1+\frac{n}{k}}{3n+1} > 2/3.
\]

Thus the proportion of the parameter space where the Bayes estimator has lower risk is greater than  $1 - k\left(\frac{\frac{2}{3}-\sqrt{\frac{4}{3}-1}}{2}\right)^\frac{k-1}{2}$. Table \ref{tab:1kprior} contains estimates using \eqref{eq:Bayesrad} and 1 - \eqref{eq:volbound2}, giving an upper bound of the proportion of the parameter space where the MLE has lower risk for various values of $k$ and $n$. It also contains simulated proportions using similar methods as in section \ref{sec:sim}. The simulation used sample sizes of 10,000,000, and thus the small proportions for $k=20$ could not be detected.

\begin{table}[h]
\footnotesize
\centering
	\begin{tabular}{llll}
	\hline
	$\boldsymbol k$ & $\boldsymbol n$ & {\bf Prop (Upper Bound)} & {\bf Prop (Simulated)}\\ 
	\hline
	$k=5$ & $n=10$ & $\ee{2.68}{-3}$ & $\ee{2.12}{-3}$\\
	$k=5$ & $n=25$ & $\ee{2.95}{-3}$ & $\ee{2.32}{-3}$\\
	$k=10$ & $n=20$ & $\ee{1.97}{-6}$ & $\ee{7.00}{-7}$\\
	$k=10$ & $n=100$ & $\ee{2.30}{-6}$ & $\ee{8.00}{-7}$\\
	$k=20$ & $n=40$ & $\ee{6.53}{-13}$ & 0\\
	$k=20$ & $n=400$ & $\ee{7.88}{-13}$ & 0\\
	\hline
	\end{tabular}
\caption{Estimates of the proportion of the parameter space where the MLE has lower risk for various values of $k$ and $n=2k,k^2$. Note that it is nearly 0 for even the moderate $k=10$.}
\label{tab:1kprior}
\end{table}

\end{example}

\subsection{Simulation results for other priors}\label{sec:sim}

The requirement that $C_{k,n}<\frac{2n}{n(k-2)-k}$ precludes some priors that may be of interest. These correspond to cases with a region of interest $\Omega_{k,R}$ such that $\delta_k(R)<\nu_{k-2}$. We have not found a suitable volume lower bound for such cases. However, we have found simulation examples of a slower convergence.

\subsubsection{Uniform prior} \label{sec:uniformsim}

A common choice of prior is the uniform prior, which is the prior $\dir(1,\dotsc,1)$. Under our notation, this corresponds to $C_{k,n}=1$, which clearly does not satisfy $C_{k,n}<\frac{2n}{n(k-2)-k}$ \eqref{eq:ckn} for $k>4$.

Here the MLE has lower risk than the Bayes estimator only on the set

\begin{equation}
\left\{ \boldsymbol\theta = (\theta_1,\theta_2,\dotsc,\theta_k): \theta_i \ge 0~\forall i, \sum_{1 \le i \le k} \theta_i=1, { \sum_{1\le i \le k} \theta_i^2 \ge \frac{3n+k}{nk+2n+k}}\right\}.
\label{eq:risksetU}
\end{equation}

We used a simulation study to better understand the regions where the MLE has lower risk than the Bayes estimator under this prior. Rearranging the inequality in the region \ref{eq:risksetU}, define the function $g$

\begin{equation}
g(\boldsymbol\theta)=-3n-k+(nk+2n+k)|\boldsymbol\theta|_2^2.
\label{eq:comp}
\end{equation}

The MLE has lower risk than the Bayes estimator for $\boldsymbol\theta\in E_k$ where $g(\boldsymbol\theta)\ge 0$. 

To estimate the percent of the volume of $E_k$ where the MLE has lower risk than the Bayes estimator, we fixed $k$ and took a uniform sample of size 500,000 from $E_k$ using the R package \verb|hitandrun| \citep{hitandrun}. We then calculated $g(\boldsymbol\theta)$ for $n=k,2k,3k,4k,k^2,2k^2,3k^2,4k^2,k^3,2k^3,3k^3,4k^3,k^4,2k^4,3k^4,4k^4$ and found the percentage of the samples where $g$ is positive for each $n$. This gives a numeric estimate of the percent of the volume of $E_k$ where the MLE has lower risk than the Bayes estimator. The results are summarized in Figure \ref{fig:sampling}. Note that eventually we see that the MLE has lower risk in essentially none of the parameter space, but it is a much slower process, taking until $k=200$.

\begin{figure}[p]
\centering
\includegraphics[width=.9\textwidth]{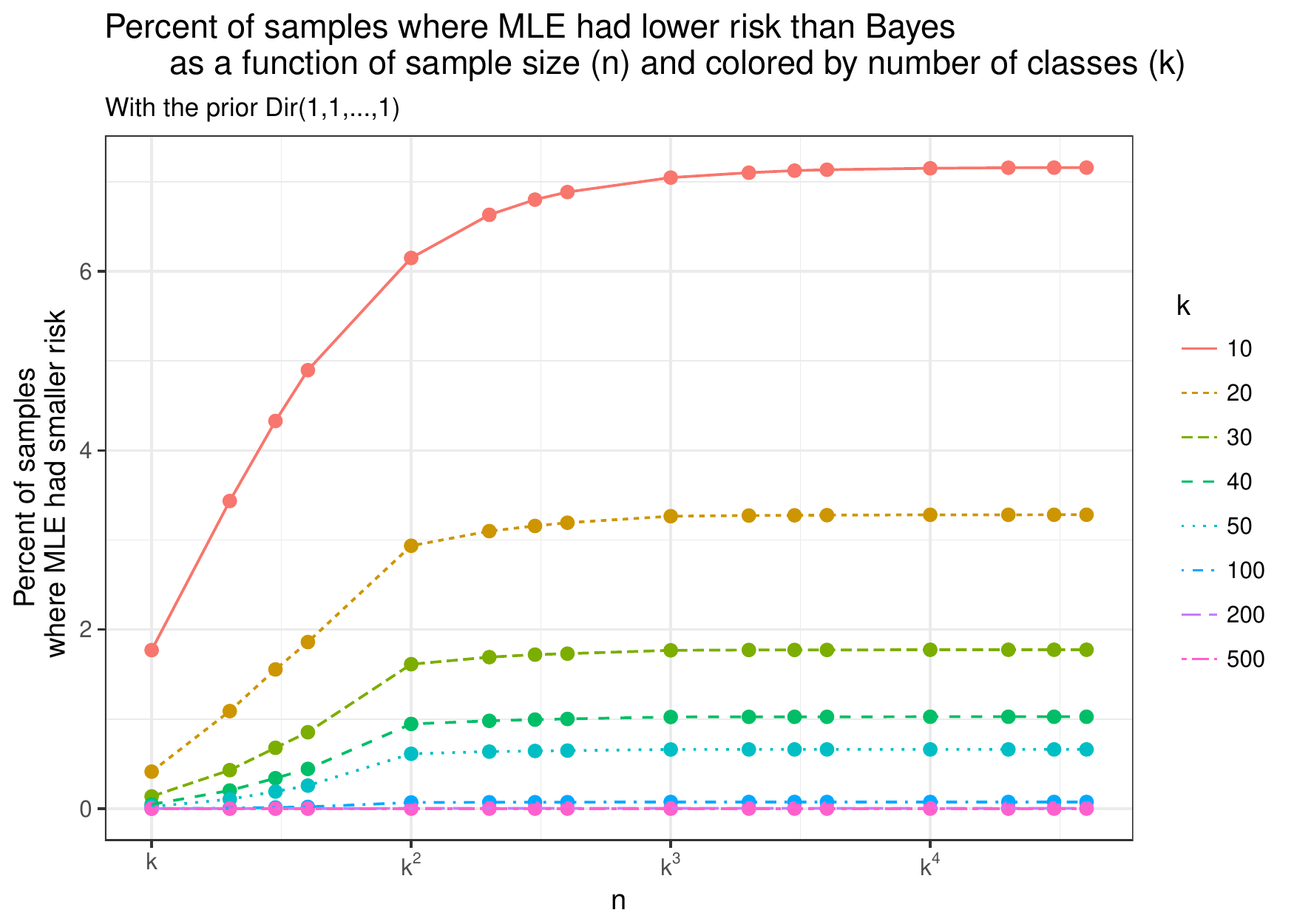}
\caption{We see that, although Theorem \ref{thm:volbound} can't be applied to the uniform prior, the MLE still has lower risk in a proportion of the parameter that decreases to 0 as $k$ increases. Note that the $n$-axis is plotted in log-scale and in terms of $k$ to make the samples comparable.}
\label{fig:sampling}
\end{figure}

\subsubsection{$\dir\left(\frac{C}{k},\dotsc,\frac{C}{k}\right)$ prior}

Note that the Dirichlet distribution on the space of all probabilities on the Borel sigma-field of a Polish space $S$ is just the Dirichlet, or multivariate Beta, distribution when $S$ is a finite set (see section~\ref{sec:tv}). By studying the limiting behavior in $n$ and $k$ of Bayes estimators in the multinomial distribution, we can hopefully gain insight into the difference between the risks of density estimation via nonparametric Bayes and using the MLE for a parametric model. However, since Ferguson's construction of the Dirichlet process prior relies on a finite base measure $\alpha(S)$, we may want to consider the sum of the prior parameters $\sum_{1\le i\le k} \alpha_i=C$, a constant, rather than $\sum_{1\le i\le k} \alpha_i=k$, which is the case for the uniform prior.

If we choose $C_{k,n}=C/k$, $R(\bb{\hat\theta},\boldsymbol\theta)\le R(\boldsymbol d_B,\boldsymbol\theta)$ (and thus the MLE has lower risk than the Bayes estimator) only on the set

\begin{equation}
\left\{ \boldsymbol\theta = (\theta_1,\theta_2,\dotsc,\theta_k): \theta_i \ge 0~\forall i, \sum_{1 \le i \le k} \theta_i=1, { \sum_{1\le i \le k} \theta_i^2 \ge \frac{2n+C+Cn/k}{2n+C+nC}}\right\}.
\label{eq:risksetC}
\end{equation}

Note that if $C=1$, we obtain the example in subsection \ref{sub:estthm}. If $C>2$, then $C_{k,n}=C/k$ does not satisfy \eqref{eq:ckn} for moderately sized $k$ and $n$. 

We again used simulation to study the regions in question. Rearranging the inequality in the region \ref{eq:risksetC}, define the function $\tilde g$

\begin{equation}
\tilde g(\boldsymbol \theta)=-Cn/k-2n-C+(2n+C+nC)|\boldsymbol\theta|_2^2.
\label{eq:comp2}
\end{equation}

The MLE has lower risk than the Bayes estimator for $\boldsymbol\theta\in E_k$ where $\tilde g(\boldsymbol\theta)\ge 0$. 

In the simulations, the limiting behavior is similar in shape to the uniform prior case, but seems to converge faster. For fixed $k$ near $C$, as $n$ increases, the percent of the volume of the parameter space where the MLE has lower risk increases to some limiting value. As $k$ increases, this limiting value decreases to zero. For $k>>C$, however, the Bayes estimator had lower risk in 100\% of the samples, indicating that the volume of the region where the MLE has lower risk is very small.

For example, with $C=30$ and $k=10, 20, 30$, the results can be found in Figure \ref{fig:sampling2}. A relatively large $C$ was chosen so that there would be several $k$ smaller than $C$ to graph.

For $k=40$, the maximum percentage was  0.12\% and for $k=50$, 0.0076\%. For $k=100, 200, 500$ (the three largest values used), the MLE had lower risk in 0\% of the samples.

\begin{figure}[p]
\centering
\includegraphics[width=1.093\textwidth]{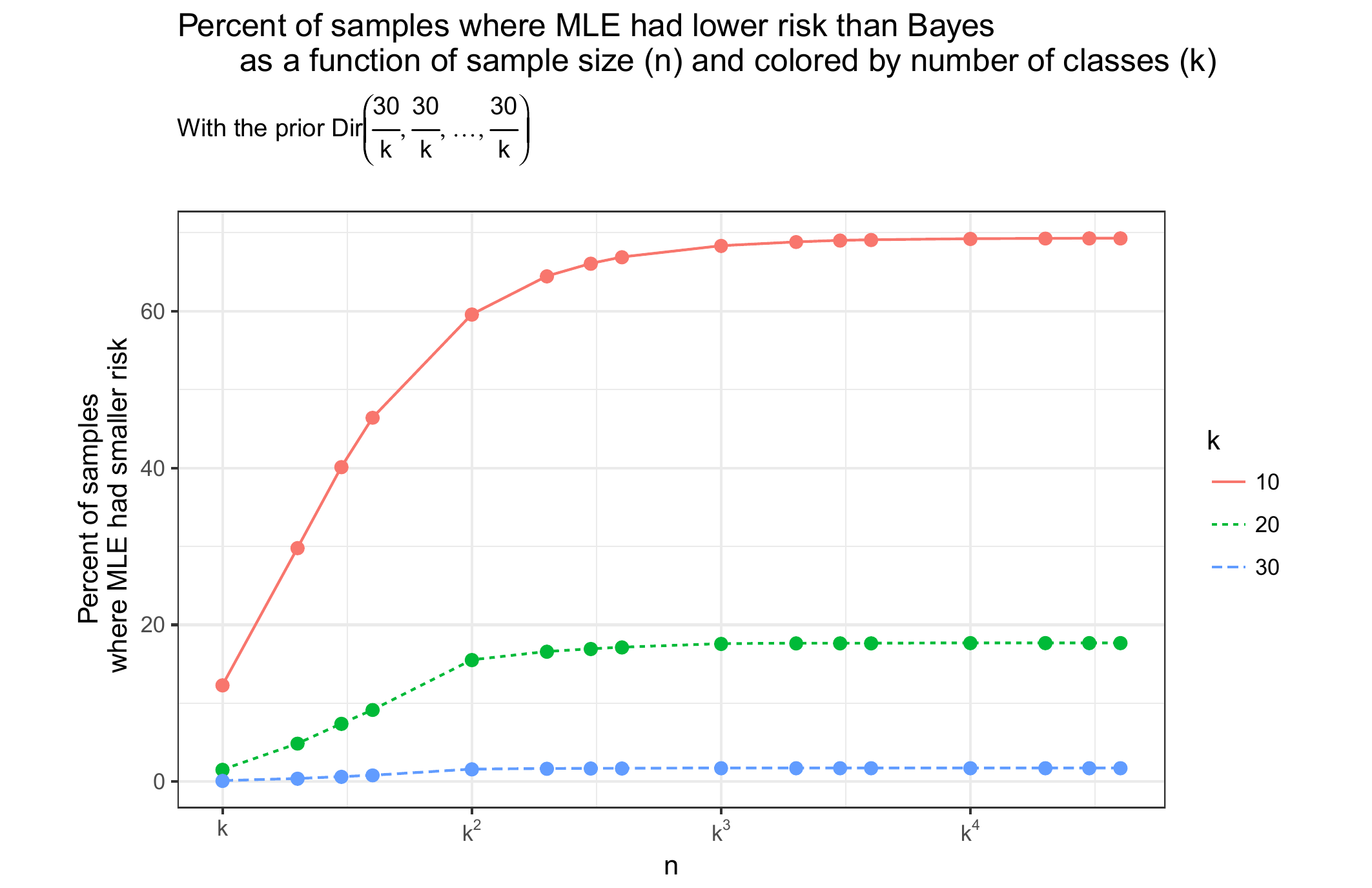}
\caption{Sampling scheme using $\tilde g$, with $C=30$. For $k=10,20,30$, the maximum percentages were 69.32\%, 17.73\%, and 1.77\%, respectively. Again we see that, although Theorem \ref{thm:volbound} does not apply, the MLE has smaller risk for almost none of the space for $k$ large enough.}
\label{fig:sampling2}
\end{figure}

Again, we see that for large enough $k$, the Bayes estimator has lower risk than the MLE for almost all of the parameter space. We would need to find a suitable volume bound to properly describe this phenomenon.

\subsubsection{$\dir(C,C,\dotsc,C)$ prior for $C>1$}

Since we have already considered the uniform prior, which is $\dir(1,1,\dotsc,1)$, it is natural to consider priors $\dir(C,C,\dotsc,C)$ with $C>1$. That is, $C_{k,n}$ is a constant rather than depending on $k$ or $n$. Priors of this type are unimodal, focusing most of their mass on the center of the parameter space $(1/k,1/k,\dotsc,1/k)$, with smaller component variances as $C$ increases.

If we choose $C_{k,n}=C$, $R(\bb{\hat\theta},\bb\theta) \le R(\bb{d_B},\bb\theta)$ (and thus the MLE has lower risk than the Bayes estimator) only on the set

\begin{equation}
\left\{ \boldsymbol\theta = (\theta_1,\theta_2,\dotsc,\theta_k): \theta_i \ge 0~\forall i, \sum_{1 \le i \le k} \theta_i=1, { \sum_{1\le i \le k} \theta_i^2 \ge \frac{2n+C(k+n)}{2n+C(k+kn)}}\right\}.
\label{eq:risksetC2}
\end{equation} 

For simulation purposes, define the function $h$, which is positive when the MLE has lower risk than the Bayes estimator, by rearranging the inequality \eqref{eq:risksetC2}:

\begin{equation}
h(\bb\theta)=-2n-c(k+n) + (2n+C(k+kn))|\bb\theta|_2^2.
\end{equation}

In simulations, a change of behavior is observed at $C=2$. For $C\in(1,2)$, the limiting behavior is similar to the uniform prior case, although with slower convergence. However, for $C\ge 2$, the opposite limiting behavior is observed. As $k$ increases, the proportion of the parameter space where the MLE has lower risk \emph{increases} to 1. See Figure \ref{fig:samplingC} for illustrative examples with $C=1.9$, $C=2$, and $C=3$.

\begin{figure}[p]
\centering
	\begin{tabular}{c}
	\includegraphics[height=.3\textheight]{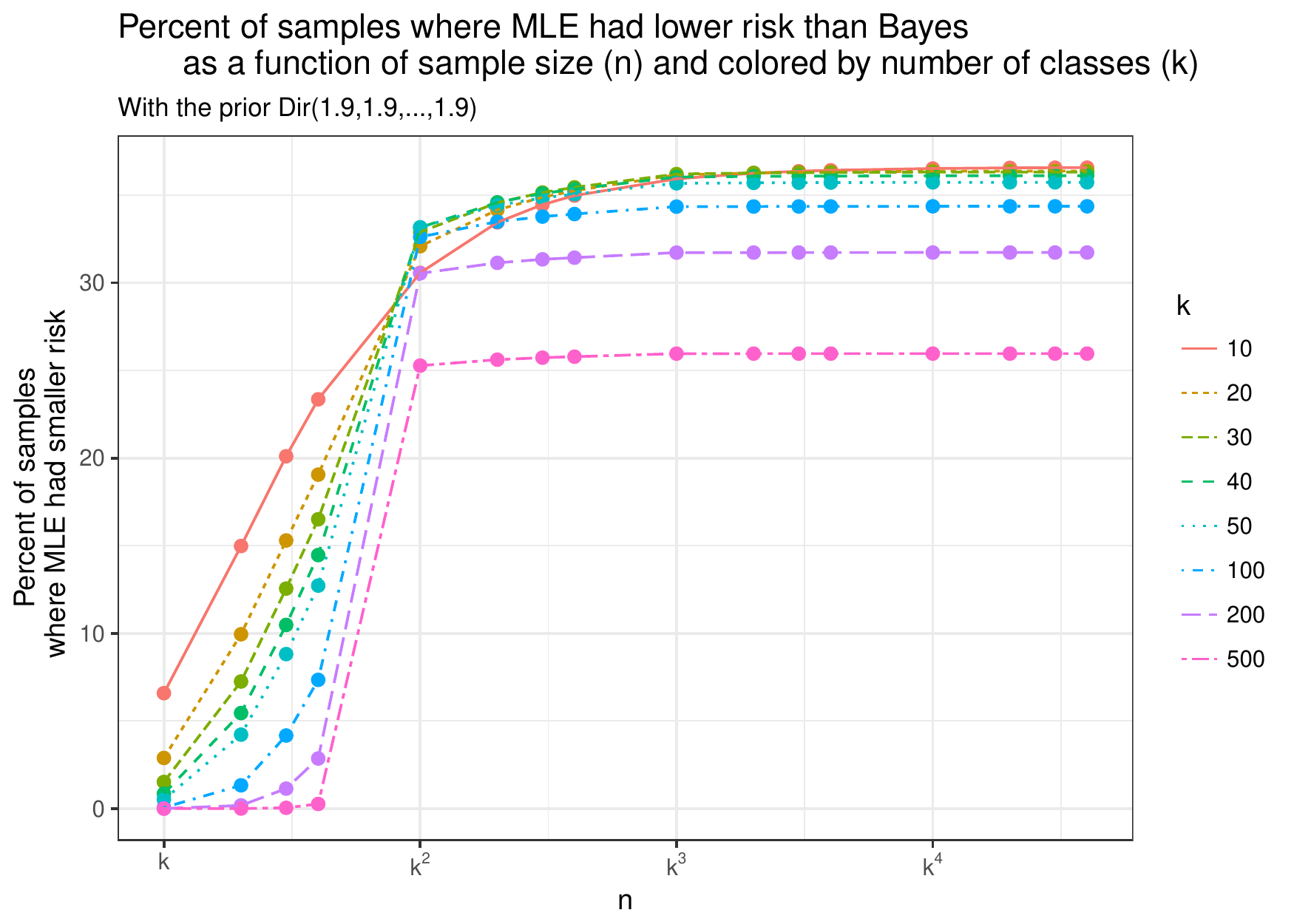} \\
	\includegraphics[height=.3\textheight]{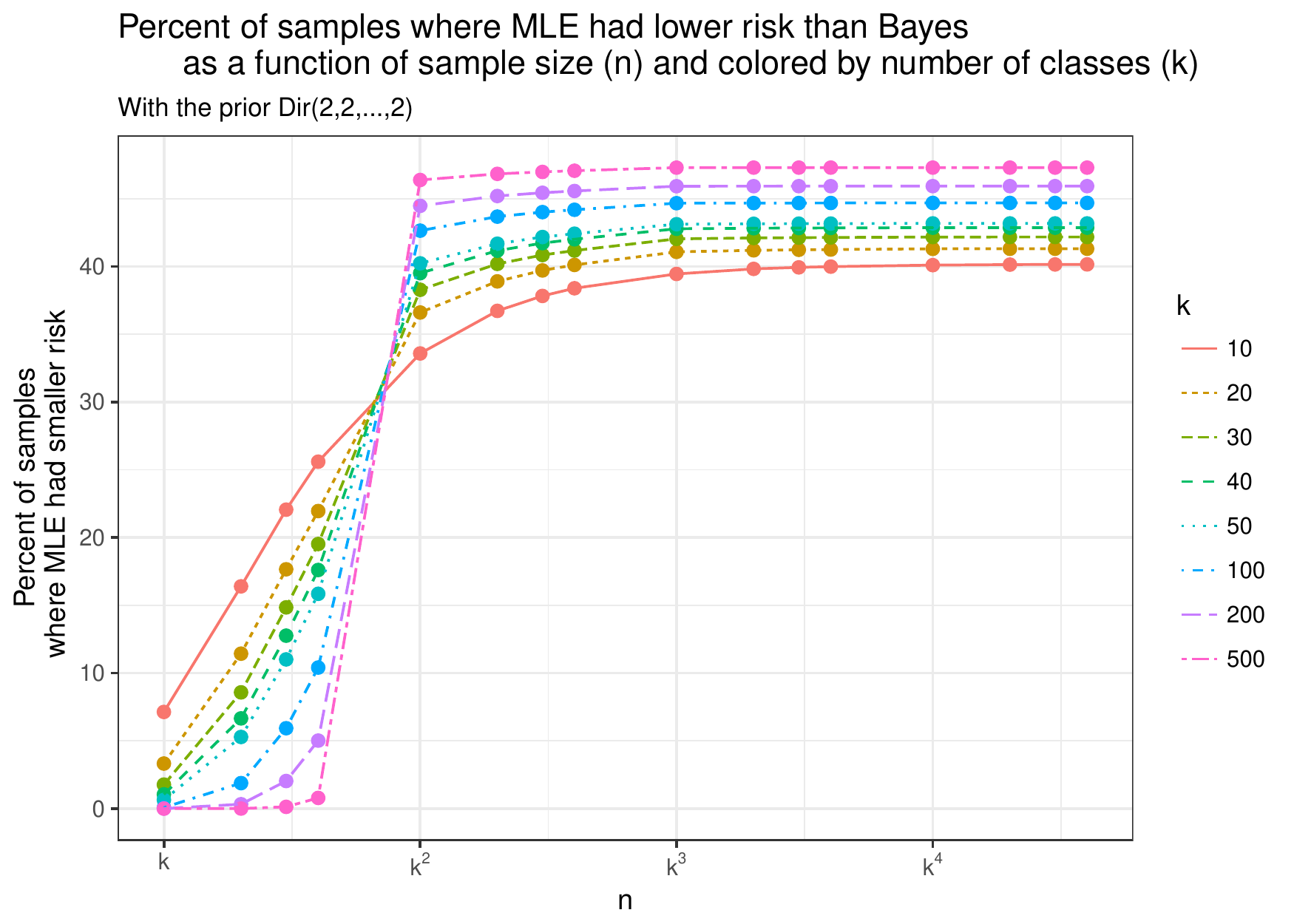} \\
	\includegraphics[height=.3\textheight]{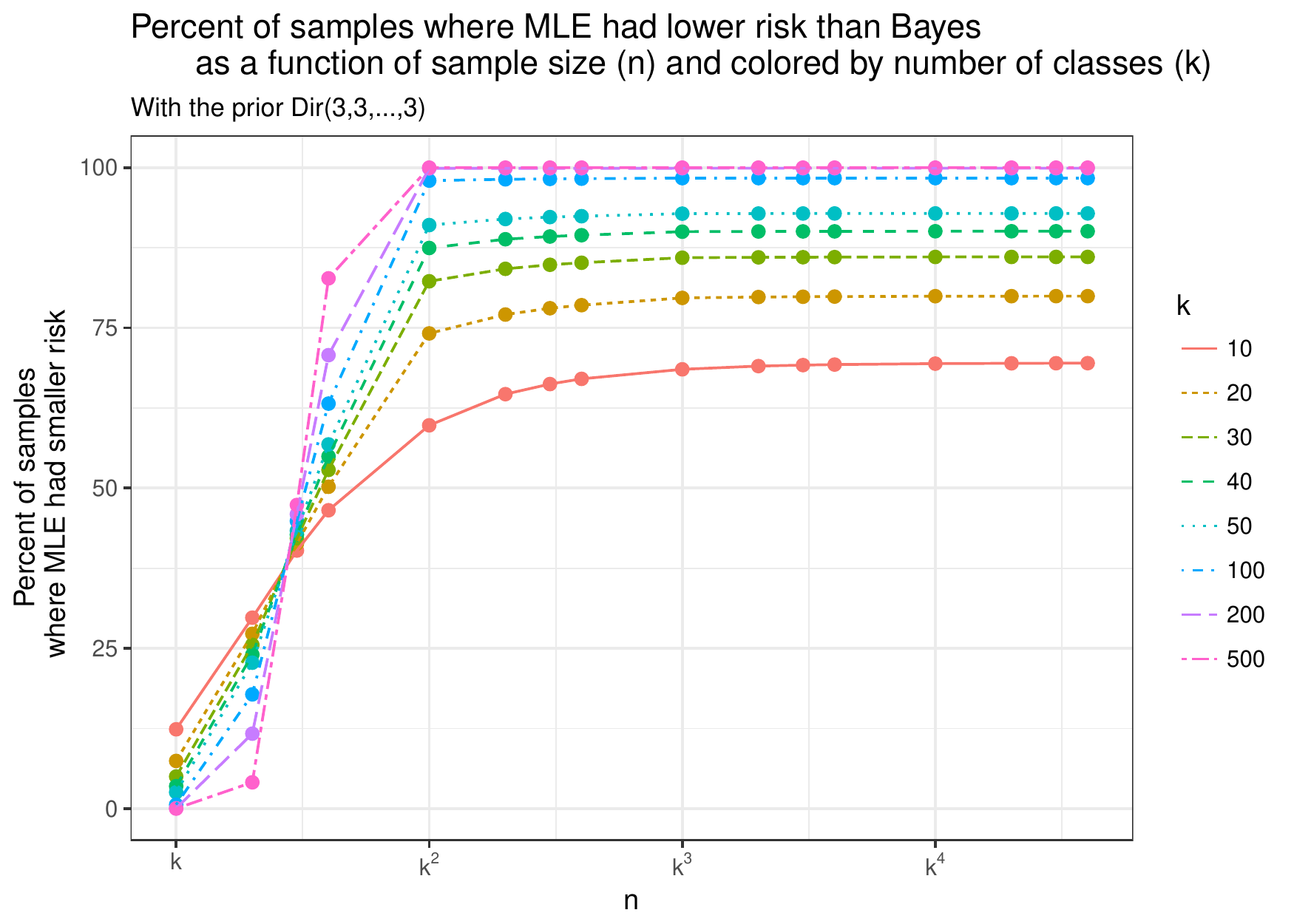} \\
	\end{tabular}
\caption{Simulation results for the prior $\dir(C,C,\dotsc,C)$ with $C=1.9, 2, 3$. Note the change in limiting behavior for $C\ge 2$, where the final percent is \emph{increasing} as $k$ increases, rather than decreasing as in the earlier examples.}
\label{fig:samplingC}
\end{figure}

%%%%%%%%%%%%%%%%%%%%%%%%%%
% Average Risk

\section{Average risk across the parameter space}\label{sec:avg}

In Section \ref{sec:volume}, we considered \emph{whether} the Bayes estimator had smaller risk than the MLE, and where this occurred within the parameter space. We did not consider the \emph{magnitude} of the decrease in risk. In this section, we look at the average risk (with respect to the volume measure) of the various estimators to understand the magnitude of decrease.

Recall that the risk of the MLE, by \eqref{eq:mlerisk}, is 

\[
R(\boldsymbol{\hat\theta},\boldsymbol\theta)=\frac{1-\sum_{1\le i\le k} \theta_i^2}{n}
\]

and the risk of the Bayes estimator with Dirichlet prior with $\alpha_i=C_{k,n},\;\forall i,$ is, by \eqref{eq:Bayesrisk},

\[
R(\boldsymbol d_B, \boldsymbol\theta)=\frac{n-kC_{k,n}^2}{\left(n+kC_{k,n}\right)^2}+\frac{\left(k^2C_{k,n}^2-n\right) \sum_{1\le i \le k} \theta_i^2}{\left(n+kC_{k,n}\right)^2}.
\]

Note that in each case, for fixed $k, \boldsymbol\theta$ and prior (choice of $C_{k,n}$), the risk is decreasing to 0 as $n\to\infty$. Thus any decrease in risk becomes negligible for large enough $n$.

Integrating the above over $E_k$, we obtain

\[
\int_{E_k} R(\bb{\hat\theta},\bb\theta)d\bb\theta = \frac{1}{n}A_k - \frac{1}{n}\int_{E_k}|\bb\theta|^2d\bb\theta
\]

and 

\[
\int_{E_k} R(\boldsymbol d_B, \boldsymbol\theta)d\bb\theta=\frac{n-kC_{k,n}^2}{\left(n+kC_{k,n}\right)^2}A_k+\frac{\left(k^2C_{k,n}^2-n\right)}{\left(n+kC_{k,n}\right)^2}\int_{E_k}|\bb\theta|^2d\bb\theta.
\]

By \eqref{eq:ekvol}, $A_k=\frac{\sqrt{k}}{(k-1)!}$. It can be shown that $\int_{E_k}|\bb\theta|^2d\bb\theta=\frac{2k\sqrt{k}}{(k+1)!}$. Thus we obtain that the average risks $\bar R_{\bb{\hat\theta}}$ and $\bar R_{\bb{d_B}}$ are, respectively, 

\begin{align}
\bar R_{\bb{\hat\theta}} &= \frac{\int_{E_k} R(\bb{\hat\theta},\bb\theta)d\bb\theta }{A_k}  \nonumber\\ 
&= \left(\frac{\sqrt{k}}{n(k-1)!} - \frac{2k\sqrt{k}}{n(k+1)!}\right)\div \frac{\sqrt{k}}{(k-1)!} \nonumber  \\ 
&= \frac{1}{n} - \frac{2k}{nk(k+1)}  \nonumber\\ 
&= \frac{k-1}{n(k+1)}, 
\end{align}

and

\begin{align}
\bar R_{\bb{d_B}} &= \frac{\int_{E_k} R(\boldsymbol d_B, \boldsymbol\theta)d\bb\theta}{A_k} \nonumber \\
&=\left[\frac{(n-kC_{k,n}^2)\sqrt{k}}{\left(n+kC_{k,n}\right)^2(k-1)!}+\frac{2k\left(k^2C_{k,n}^2-n\right)\sqrt{k}}{\left(n+kC_{k,n}\right)^2(k+1)!}\right]\div  \frac{\sqrt{k}}{(k-1)!} \nonumber \\
&=\frac{n-kC_{k,n}^2}{\left(n+kC_{k,n}\right)^2} + \frac{2\left(k^2C_{k,n}^2-n\right)}{\left(n+kC_{k,n}\right)^2(k+1)} \nonumber\\
&=\frac{(k-1)(kC_{k,n}^2+n)}{(kC_{k,n}+n)^2(k+1)}.
\end{align}

Then the average decrease in risk for the Bayes estimator \emph{in proportion to the average risk of the MLE} is

\begin{align}
\frac{\bar R_{\bb{\hat\theta}} - \bar R_{\bb{d_B}}}{\bar R_{\bb{\hat\theta}}} &= \left[\frac{k-1}{n(k+1)} - \frac{(k-1)(kC_{k,n}^2+n)}{(kC_{k,n}+n)^2(k+1)}\right] \div \frac{k-1}{n(k+1)} \nonumber \\
&= 1-\frac{n\left(kC_{k,n}^2+n\right)}{\left(kC_{k,n}+n\right)^2}. \label{eq:avgriskdec}
\end{align}

For fixed $k$ and $n$, this function has a global maximum at $C_{k,n}=1$, as illustrated in Figure \ref{fig:avgriskdec}. When $C_{k,n}=1$ we obtain, by plugging in to \eqref{eq:avgriskdec},

\begin{equation}
\frac{\bar R_{\bb{\hat\theta}} - \bar R_{\bb{d_{B,C_{k,n}=1}}}}{\bar R_{\bb{\hat\theta}}}=\frac{k}{k+n}=\frac{1}{1+n/k}.
\label{eq:avgriskC1}
\end{equation}

\begin{figure}[h]
\centering
\includegraphics[height=.3\textheight]{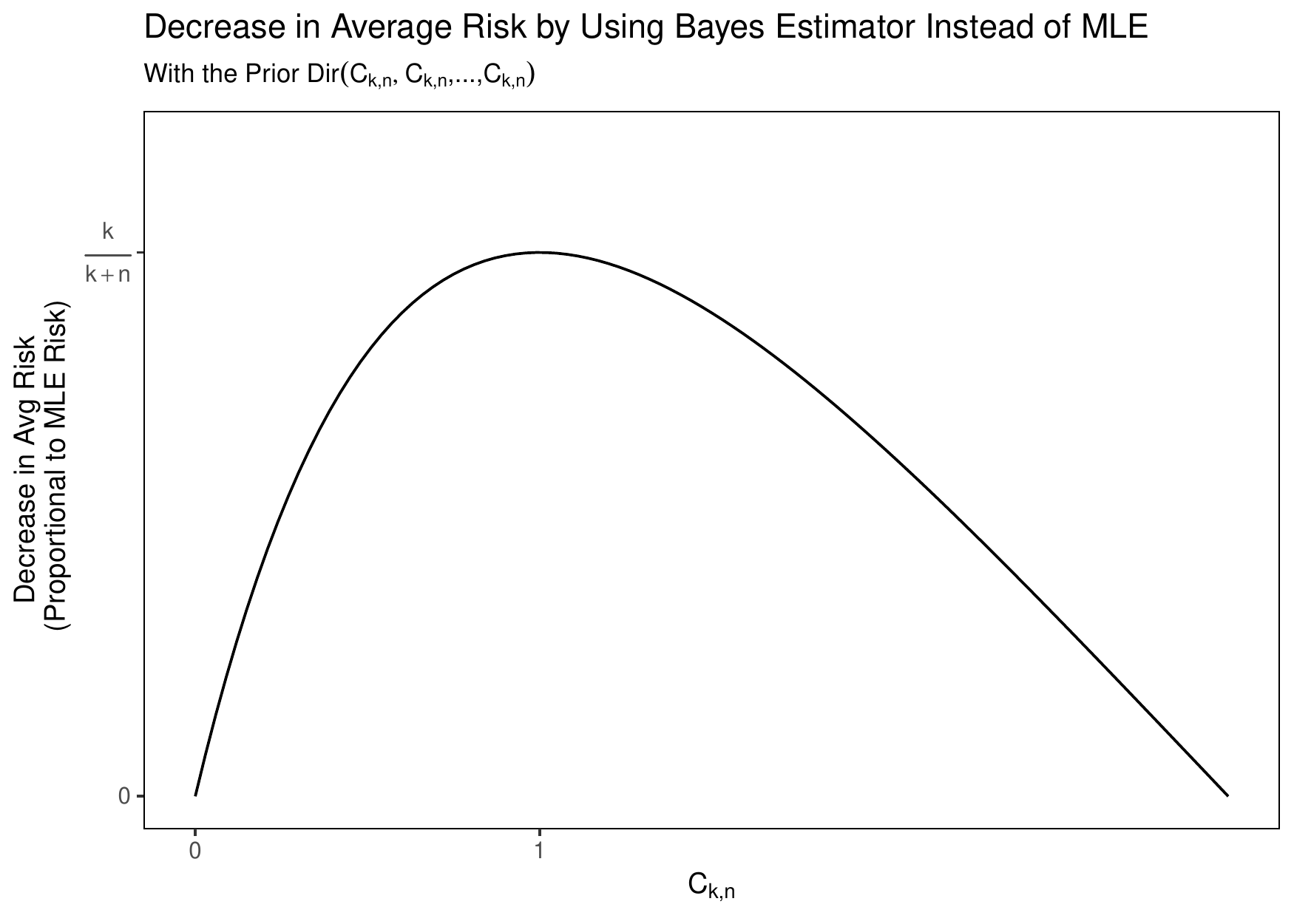}
\caption{Graph of the average decrease in risk for the Bayes estimator in proportion to the average risk of the MLE, as a function of $C_{k,n}$ with fixed $k$ and $n$. The function has a global maximum at $C_{k,n}=1$. It is postive for the region shown here, but can become negative for large enough $C_{k,n}$. This graph was made using the values $k=10$ and $n=30$; however a similar shape will result from any fixed $k$ and $n$, with a maximum at $C_{k,n}=1$ and maximum value $k/(k+n)$.}
\label{fig:avgriskdec}
\end{figure}

We see from \eqref{eq:avgriskC1} that for all $k$, there is some positive decrease in (proportional) average risk that depends on the relationship between $k$ and $n$. For example, when $n=k$, the decrease is 50\%, when $n=2k$, the decrease is 33.3\%, and when $n=k^2$, the decrease is $100\cdot\left(\frac{1}{k+1}\right)\%$. The Bayes estimator with the uniform prior is the estimator that has the smallest average risk (with respect to the Lebesgue measure) by definition (see, e.g., \citet[pp. 22-23]{textbook}). We see here that this effect, in comparison to the MLE, is most pronounced for $n$ on the order of $k$. 

On the other hand, we can consider $C_{k,n}=1/k$. This prior gave rise to an estimator that had smaller risk that the MLE for nearly the entire parameter space for even small to moderate $k$ (see example \ref{ex:prior1-k}). Plugging in to \eqref{eq:avgriskdec},

\begin{equation}
\frac{\bar R_{\bb{\hat\theta}} - \bar R_{\bb{d_{B,C_{k,n}=1/k}}}}{\bar R_{\bb{\hat\theta}}}=1 - \frac{kn^2+n}{k(n+1)^2}=1 - \frac{n(n+1/k)}{(n+1)^2}.
\label{eq:avgriskC1k}
\end{equation}

This decreases to 0 as $n$ become large, but depends less on $k$ for its limiting behavior. For the nonparametric estimation of a distribution, based on a given sample size $n$, it is useful to consider the behavior of \eqref{eq:avgriskC1k} as $k\nearrow\infty$. For fixed $n$, \eqref{eq:avgriskC1k} $\nearrow 1-\frac{n^2}{(n+1)^2}$.

Comparing the behavior for moderate to large $k$ and $n$ in \eqref{eq:avgriskC1} and \eqref{eq:avgriskC1k} shows an opposite kind of optimality than in Section \ref{sec:volume}, where the $\dir(1/k,\dotsc,1/k)$ was favored over the uniform prior. Balancing the smaller radius of the region with lower risk for the estimator under the uniform prior with its optimal decrease in average risk indicates that, for moderate to large $k$ and $n$, this estimator ($\bb{d_{B,C_{k,n}=1}}$) is preferable over other Bayes estimators. For small $k$ ($k<10$) or when the true distribution is believed to have an unknown dominating class (and thus requires an exchangeable prior with a large radius for a decrease in risk), the estimator with the $\dir(1/k,\dotsc,1/k)$ prior may be preferable. For illustration, some comparison values are in Table \ref{tab:comparison}.

\begin{table}[h]
\centering
\begingroup\footnotesize
	\begin{tabular}{ll | l | rr | rr}
	\hline
	&& {\bf MLE} & \multicolumn{1}{| l}{\bf Uniform Prior} & & \multicolumn{1}{| l}{$\boldsymbol{1/k}$ {\bf Prior}} & \\
	$\boldsymbol k$ & $\boldsymbol n$&{\bf Avg. Risk} & {\bf Decrease (\%)} & {\bf Vol. Prop.} & {\bf Decrease (\%)} & {\bf Vol. Prop.} \\
	\hline
	$k=5$		& $n=5$	& $\ee{1.33}{-1}$ & 50.00 \% & 0.9443 & 27.77 \% & 0.9977 \\
	$k=5$		& $n=25$	& $\ee{2.67}{-2}$ & 16.67 \% & 0.8926 & 6.80 \% &  0.9970 \\
	$k=10$	& $n=10$	& $\ee{8.18}{-2}$ & 50.00 \% & 0.9823 & 16.53 \% & 1\\
	$k=10$	& $n=100$ 	& $\ee{8.18}{-3}$ & 9.09 \% & 0.9385 & 1.87 \% & 1\\
	$k=50$	& $n=50$	& $\ee{1.92}{-2}$ & 50.00 \% & 0.9998 & 3.84 \% & 1\\
	$k=50$	& $n=2500$	& $\ee{3.84}{-4}$ & 1.96 \% &0.9939 & 0.08 \% & 1\\
	$k=100$	& $n=100$	& $\ee{9.80}{-3}$ & 50.00 \% & 1 & 1.96 \% & 1\\
	$k=100$ 	& $n=10000$ & $\ee{9.80}{-5}$ & 0.99 \% & 0.9993 & 0.02 \% & 1\\
	\hline
	\end{tabular}
\caption{Table comparing the average risks for the MLE and the Bayes estimators under the uniform prior and the $\dir(1/k,\dotsc,1/k)$ priors for $k=5,10,50,100$ and $n=k,k^2$. Listed as well are estimated proportions of the parameter space where the estimators have lower risk than the MLE. For the uniform prior, these are estimated using the simulation in section \ref{sec:uniformsim}. For the $\dir(1/k,\dotsc,1/k)$ prior, the estimates use \eqref{eq:Bayesrad} and \eqref{eq:volbound2}.}
\label{tab:comparison}
\endgroup
\end{table}

%%%%%%%%%%%%%%%%%%%%%%%%%%
% Simulation of TV Distances

\section{On simulation of total variation distances between the true distribution and the distributions of (1) MLE-based frequentist estimators and (2) nonparametric Bayes estimators}
\label{sec:tv}

Let $Q$ be a probability measure on some measurable state space $(S,\mathcal S)$, which is partitioned into $k$ measurable subsets $A_1,\dotsc,A_k$. If one wishes to estimate the probabilities $Q(A_j)=\theta_j>0,\:  j=1,\dotsc,k$, based on the numbers $n_1,\dotsc,n_k$ of a random sample of size $n$ from $Q$ falling into these classes, the MLE $\boldsymbol{\hat{\theta}}=(\hat\theta_1=n_1/n,\dotsc,\hat\theta_k=n_k/n)$ is the time-honored estimate, and $(n_1,\dotsc,n_k)$ has the multinomial distribution $M(\boldsymbol n:\theta_1,\theta_2,\dotsc,\theta_k)$. One may, instead, use the Bayes estimator with the conjugate prior $\operatorname{Dir}(\alpha_1,\alpha_2,\dotsc,\alpha_k)$, namely, $\boldsymbol{d_B}=\left(\left(n_1+\alpha_1\right)/\left(n+\sum\alpha_j\right),\dotsc,\left(n_k+\alpha_k\right)/\left(n+\sum\alpha_j\right)\right)$, where $\alpha_i>0\, \forall i$. We have seen that under squared error loss, for large or moderately large $k$, $\boldsymbol{d_B}$ outperforms $\hat\theta$ on most of the parameter space in cases (1) $\alpha_i=C>0\, \forall i$ (for $C<2$), and (2) $\alpha_i=1/k\,\forall i$. For large $k$ these estimators provide approximations to $Q$. We now consider a way of providing approximations to $Q$ in variation norm for classes of $S$ with a finite volume measure $\omega$ and $Q$ absolutely continuous with respect to it.

Consider a closed bounded region $S$ such as a ball or rectangular region in an Euclidean space, or a compact Riemannian manifold such as the sphere $S^d$, Kendall's planar shape space $\Sigma_2^m$ (which is the same as the complex projective space $\mathbb CP^{m-2}$), etc., each equipped with a volume measure $\omega$. As above, we consider a partition of $S$ into $k$ subsets $A_{j,k},\: j=1,\dotsc,k,$ such that $\omega(A_{j,k})>0\,\forall j,k$ and $\omega(A_{j,k})\to 0$ as $k\to\infty$, and let $Q(A_{j,k})=\theta_{j,k}$. Let $\boldsymbol{\hat\theta}=(\hat\theta_{1,k}=n_1/n,\dotsc,\hat\theta_{k,k}=n_k/n)$ be the MLE of $\boldsymbol\theta=(\theta_{1,k},\dotsc,\theta_{k,k})$. Consider the Bayes estimator under the Dirichlet prior with $\alpha_i=1/k\,\forall i$ and under the \emph{absolute error} loss function, say $\boldsymbol{\breve\theta}=(\breve\theta_{1,1},\dotsc,\breve\theta_{k,k})$, where $\breve\theta_{j,k}$ is the median of the posterior distribution of $\theta_{j,k}$, namely, the \emph{median} of $\operatorname{Beta}(1/k+n_j,n+1-n_j-1/k),\: j=1,\dots,k$. 

The risk function of $\boldsymbol{\breve\theta}$ is 

\begin{equation}
R\left(\boldsymbol\theta,\boldsymbol{\breve\theta}\right)=\sum_{1\le j\le k} E_{\theta_{j,k}}\left|\breve\theta_{j,k}-\theta_{j,k}\right|=\sum_{j=1}^k\sum_{r=0}^n C^n_r\theta_{j,k}^r(1-\theta_{j,k})^{n-r}|F^{-1}_{(r+1/k,n-r+1-1/k)}(1/2)-\theta_{j,k}|,
\label{eq:Bayesabsrisk}
\end{equation}

where $F_{(\alpha,\beta)}$ is the distribution function of $\operatorname{Beta}(\alpha,\beta)$, $F^{-1}_{(\alpha,\beta)}$ is its inverse, and $F^{-1}_{(\alpha,\beta)}(1/2)$ is the median of $\operatorname{Beta}(\alpha,\beta)$.

The risk function of $\boldsymbol{\hat\theta}$ is

\begin{equation}
R\left(\boldsymbol\theta,\boldsymbol{\hat\theta}\right)=\sum_{1\le j\le k} E_{\theta_{j,k}}|\hat\theta_{j,k}-\theta_{j,k}|=\sum_{j=1}^k\sum_{r=0}^n C_r^n\theta_{j,k}^r(1-\theta_{j,k})^{n-r}|r/n-\theta_{j,k}|.
\label{eq:mleabsrisk}
\end{equation}

Suppose $Q$ has a continuous density $f$ (with respect to the volume measure $\omega$). We now consider the problem of estimating the approximate density of $Q$ as $f_k$, where

\begin{equation}
f_k(x)=\theta_{j,k}/\omega(A_{j,k})\quad\text{for } x\in A_{j,k} \:(j=1,\dotsc,k).
\end{equation}

Assume that $\max \{\operatorname{diam}(A_{j,k}): j=1,\dotsc,k \}\to 0$ as $k\to\infty$. Then $\int|f_k(x)-f(x)|\omega(dx)\to0$ as $k\to\infty$. We consider now the estimates of $f_k$ given by

\begin{align}
\hat f_k(x)&=\hat\theta_{j,k}/\omega(A_{j,k})\quad\text{for } x\in A_{j,k}\: (j=1,\dotsc,k),\\
\breve f_k(x)&=\breve\theta_{j,k}/\omega(A_{j,k})\quad\text{for } x\in A_{j,k}\: (j=1,\dotsc,k).
\end{align}

The $L_1$ (and thus the total variation) distances between $f_k$ and its estimates above are given by \eqref{eq:Bayesabsrisk} and \eqref{eq:mleabsrisk}. In particular, the Bayes estimator $\breve f_k(x)$ basically provides the nonparametric estimator of $f_k$ under the Dirichlet prior with base measure $\alpha_k$ on $S$ with density $\alpha_k(x)=\frac{1}{k\omega(A_{j,k})}$ for $x\in A_{j,k}\: (j=1,\dotsc,k)$.

Note that one may consider the above type of approximation for an unbounded state space by requiring that the density decays fast outside a bounded region.

Similarly, the $L_2$ distance between $f_k$ and its estimators under the squared error loss are related to the corresponding risk functions. If the $A_{j,k}$ are chosen such that $\omega(A_{j,k})=\omega(S)/k\; \forall j$ and $\hat\theta_{j,k}=n_j/n$ and $d_{j,k}=(n_j+1/k)/(n+1)$ (the Bayes estimator under squared error loss with Dirichlet prior $\operatorname{Dir}(1/k,\dotsc,1/k)$), then 

\begin{align}
\norm{f_k-\hat f_k}_2&=\sqrt{\frac{k}{\omega(S)}}\sqrt{R\left(\boldsymbol{\hat\theta},\boldsymbol\theta\right)},\label{eq:mlel2risk}\\
\norm{f_k-\tilde f_k}_2&=\sqrt{\frac{k}{\omega(S)}}\sqrt{R\left(\boldsymbol{d_B},\boldsymbol\theta\right)},\label{eq:Bayesl2risk}
\end{align}

where $R\left(\boldsymbol{\hat\theta},\boldsymbol\theta\right)$ and $R\left(\boldsymbol{d_B},\boldsymbol\theta\right)$ are as defined in \eqref{eq:mlerisk} and \eqref{eq:Bayesrisk}, respectively.

\subsection{Simulated $L_1$ distances}

We simulated these $L_1$ distances using the case where $\omega(S)=1$ by uniformly sampling 10,000 probability vectors from the standard $k$-simplex for each of $k=10, 20, 30, 40, 50, 100, 200, 500$ and calculating the risk using equations \eqref{eq:Bayesabsrisk} and \eqref{eq:mleabsrisk}. We then averaged the 100 risk calcuations to obtain an estimate of the average $L_1$ distance across the parameter space. Note that the sample size of 10,000 is smaller than that used in the $L_2$ case (1,000,000) due to increased computational complexity.

The results can be found in Tables \ref{tab:l1k10} through \ref{tab:l1k500}. Note that for large $n$, the estimated average $L_1$ distance for the Bayes estimator is often slightly larger that that for the MLE. It is unknown whether this is because the two are too close to distinguish with sample means, or that the $L_1$ distance is truly larger.

% latex table generated in R 3.4.4 by xtable 1.8-4 package
% Mon Aug 19 15:55:43 2019
\begin{table}[p]
\begin{minipage}{.5\textwidth}
\scriptsize
\centering
\begin{tabular}{lrr}
  \hline
  $\boldsymbol{k=10}$\\
  \hline
$\boldsymbol n$ & {\bf MLE} & {\bf Bayes} \\ 
  \hline
$n=20$ & 0.4689 & 0.4541 \\ 
  $n=30$ & 0.3827 & 0.3765 \\ 
  $n=40$ & 0.3314 & 0.3283 \\ 
  $n=50$ & 0.2964 & 0.2947 \\ 
  $n=100$ & 0.2095 & 0.2095 \\ 
  $n=200$ & 0.1481 & 0.1483 \\ 
  $n=300$ & 0.1209 & 0.1211 \\ 
  $n=400$ & 0.1047 & 0.1048 \\ 
  $n=500$ & 0.09363 & 0.09374 \\ 
  $n=600$ & 0.08547 & 0.08556 \\ 
  $n=700$ & 0.07913 & 0.0792 \\ 
  $n=800$ & 0.07402 & 0.07408 \\ 
  $n=900$ & 0.06978 & 0.06984 \\ 
  $n=1000$ & 0.0662 & 0.06625 \\ 
   \hline
\end{tabular}
\caption{Simulated  $L_1$  distances for $k=10$.}
\label{tab:l1k10}
\end{minipage}
\hfillx
\begin{minipage}{.5\textwidth}

% latex table generated in R 3.4.4 by xtable 1.8-4 package
% Mon Aug 19 15:58:20 2019
\scriptsize
\centering
\begin{tabular}{lrr}
  \hline
  $\boldsymbol{k=20}$\\
  \hline
$\boldsymbol n$ & {\bf MLE} & {\bf Bayes} \\ 
  \hline
$n=20$ & 0.682 & 0.6333 \\ 
  $n=30$ & 0.5583 & 0.5343 \\ 
  $n=40$ & 0.4839 & 0.4706 \\ 
  $n=50$ & 0.433 & 0.425 \\ 
  $n=100$ & 0.3063 & 0.3057 \\ 
  $n=200$ & 0.2166 & 0.2172 \\ 
  $n=300$ & 0.1768 & 0.1774 \\ 
  $n=400$ & 0.1531 & 0.1536 \\ 
  $n=500$ & 0.137 & 0.1373 \\ 
  $n=600$ & 0.125 & 0.1253 \\ 
  $n=700$ & 0.1157 & 0.116 \\ 
  $n=800$ & 0.1083 & 0.1085 \\ 
  $n=900$ & 0.1021 & 0.1023 \\ 
  $n=1000$ & 0.09683 & 0.09702 \\ 
   \hline
\end{tabular}
\caption{Simulated  $L_1$  distances for $k=20$.}
\end{minipage}
\end{table}

% latex table generated in R 3.4.4 by xtable 1.8-4 package
% Mon Aug 19 15:58:20 2019
\begin{table}[p]
\begin{minipage}{.5\textwidth}
\scriptsize
\centering
\begin{tabular}{lrr}
  \hline
  $\boldsymbol{k=30}$\\
  \hline
$\boldsymbol n$ & {\bf MLE} & {\bf Bayes} \\ 
  \hline
$n=20$ & 0.8373 &  0.75 \\ 
  $n=30$ & 0.6885 & 0.6404 \\ 
  $n=40$ & 0.5978 & 0.5685 \\ 
  $n=50$ & 0.5353 & 0.5163 \\ 
  $n=100$ & 0.3791 & 0.376 \\ 
  $n=200$ & 0.2682 & 0.2687 \\ 
  $n=300$ & 0.219 & 0.2198 \\ 
  $n=400$ & 0.1896 & 0.1904 \\ 
  $n=500$ & 0.1696 & 0.1703 \\ 
  $n=600$ & 0.1548 & 0.1554 \\ 
  $n=700$ & 0.1433 & 0.1438 \\ 
  $n=800$ & 0.1341 & 0.1345 \\ 
  $n=900$ & 0.1264 & 0.1268 \\ 
  $n=1000$ & 0.1199 & 0.1203 \\ 
   \hline
\end{tabular}
\caption{Simulated  $L_1$  distances for $k=30$.}
\end{minipage}
\hfillx
\begin{minipage}{.5\textwidth}

% latex table generated in R 3.4.4 by xtable 1.8-4 package
% Mon Aug 19 15:58:20 2019

\scriptsize
\centering
\begin{tabular}{lrr}
  \hline
  $\boldsymbol{k=40}$\\
  \hline
$\boldsymbol n$ & {\bf MLE} & {\bf Bayes} \\ 
  \hline
$n=20$ & 0.9611 & 0.8375 \\ 
  $n=30$ & 0.7948 & 0.7208 \\ 
  $n=40$ & 0.6916 & 0.6436 \\ 
  $n=50$ &  0.62 & 0.5872 \\ 
  $n=100$ & 0.4397 & 0.4325 \\ 
  $n=200$ & 0.3112 & 0.3111 \\ 
  $n=300$ & 0.2541 & 0.2549 \\ 
  $n=400$ & 0.2201 & 0.221 \\ 
  $n=500$ & 0.1968 & 0.1977 \\ 
  $n=600$ & 0.1797 & 0.1805 \\ 
  $n=700$ & 0.1664 & 0.1671 \\ 
  $n=800$ & 0.1556 & 0.1562 \\ 
  $n=900$ & 0.1467 & 0.1473 \\ 
  $n=1000$ & 0.1392 & 0.1397 \\ 
   \hline
\end{tabular}
\caption{Simulated  $L_1$  distances for $k=40$.}
\end{minipage}
\end{table}

% latex table generated in R 3.4.4 by xtable 1.8-4 package
% Mon Aug 19 15:58:20 2019
\begin{table}[p]
\begin{minipage}{.5\textwidth}
\scriptsize
\centering
\begin{tabular}{lrr}
  \hline
  $\boldsymbol{k=50}$\\
  \hline
$\boldsymbol n$ & {\bf MLE} & {\bf Bayes} \\ 
  \hline
$n=20$ & 1.063 & 0.9081 \\ 
  $n=30$ & 0.8851 & 0.786 \\ 
  $n=40$ & 0.7723 & 0.7051 \\ 
  $n=50$ & 0.6931 & 0.6455 \\ 
  $n=100$ & 0.4926 & 0.4803 \\ 
  $n=200$ & 0.3488 & 0.3477 \\ 
  $n=300$ & 0.2849 & 0.2855 \\ 
  $n=400$ & 0.2468 & 0.2477 \\ 
  $n=500$ & 0.2207 & 0.2217 \\ 
  $n=600$ & 0.2015 & 0.2024 \\ 
  $n=700$ & 0.1865 & 0.1874 \\ 
  $n=800$ & 0.1745 & 0.1753 \\ 
  $n=900$ & 0.1645 & 0.1652 \\ 
  $n=1000$ & 0.1561 & 0.1567 \\ 
   \hline
\end{tabular}
\caption{Simulated  $L_1$  distances for $k=50$.}
\end{minipage}
\hfillx
\begin{minipage}{.5\textwidth}

% latex table generated in R 3.4.4 by xtable 1.8-4 package
% Mon Aug 19 15:58:20 2019
\scriptsize
\centering
\begin{tabular}{lrr}
  \hline
  $\boldsymbol{k=100}$\\
  \hline
$\boldsymbol n$ & {\bf MLE} & {\bf Bayes} \\ 
  \hline
$n=20$ & 1.391 & 1.144 \\ 
  $n=30$ & 1.203 & 1.008 \\ 
  $n=40$ & 1.068 & 0.9143 \\ 
  $n=50$ & 0.9676 & 0.8454 \\ 
  $n=100$ & 0.6973 & 0.6499 \\ 
  $n=200$ & 0.4958 & 0.4838 \\ 
  $n=300$ & 0.4053 & 0.4016 \\ 
  $n=400$ & 0.3511 & 0.3503 \\ 
  $n=500$ & 0.3141 & 0.3144 \\ 
  $n=600$ & 0.2868 & 0.2876 \\ 
  $n=700$ & 0.2655 & 0.2665 \\ 
  $n=800$ & 0.2484 & 0.2495 \\ 
  $n=900$ & 0.2342 & 0.2353 \\ 
  $n=1000$ & 0.2222 & 0.2233 \\ 
   \hline
\end{tabular}
\caption{Simulated  $L_1$  distances for $k=100$.}
\end{minipage}
\end{table}

% latex table generated in R 3.4.4 by xtable 1.8-4 package
% Mon Aug 19 15:58:20 2019
\begin{table}[p]
\begin{minipage}{.5\textwidth}
\scriptsize
\centering
\begin{tabular}{lrr}
  \hline
  $\boldsymbol{k=200}$\\
  \hline
$\boldsymbol n$ & {\bf MLE} & {\bf Bayes} \\ 
  \hline
$n=20$ & 1.652 & 1.364 \\ 
  $n=30$ & 1.512 & 1.245 \\ 
  $n=40$ & 1.392 & 1.149 \\ 
  $n=50$ & 1.291 & 1.073 \\ 
  $n=100$ & 0.9698 & 0.8479 \\ 
  $n=200$ & 0.6991 & 0.652 \\ 
  $n=300$ & 0.5732 & 0.5508 \\ 
  $n=400$ & 0.4972 & 0.4854 \\ 
  $n=500$ & 0.4451 & 0.4386 \\ 
  $n=600$ & 0.4065 & 0.4029 \\ 
  $n=700$ & 0.3764 & 0.3746 \\ 
  $n=800$ & 0.3522 & 0.3515 \\ 
  $n=900$ & 0.3321 & 0.332 \\ 
  $n=1000$ & 0.3151 & 0.3155 \\ 
   \hline
\end{tabular}
\caption{Simulated  $L_1$  distances for $k=200$.}
\end{minipage}
\hfillx
\begin{minipage}{.5\textwidth}

% latex table generated in R 3.4.4 by xtable 1.8-4 package
% Mon Aug 19 15:58:20 2019
\scriptsize
\centering
\begin{tabular}{lrr}
  \hline
  $\boldsymbol{k=500}$\\
  \hline
$\boldsymbol n$ & {\bf MLE} & {\bf Bayes} \\ 
  \hline
$n=20$ & 1.849 & 1.543 \\ 
  $n=30$ &  1.78 & 1.483 \\ 
  $n=40$ & 1.714 & 1.425 \\ 
  $n=50$ & 1.653 & 1.369 \\ 
  $n=100$ & 1.393 & 1.152 \\ 
  $n=200$ & 1.071 & 0.9189 \\ 
  $n=300$ & 0.8932 & 0.7951 \\ 
  $n=400$ & 0.7798 & 0.7133 \\ 
  $n=500$ & 0.7003 & 0.6532 \\ 
  $n=600$ & 0.6407 & 0.6064 \\ 
  $n=700$ & 0.5941 & 0.5684 \\ 
  $n=800$ & 0.5562 & 0.5367 \\ 
  $n=900$ & 0.5248 & 0.5097 \\ 
  $n=1000$ & 0.4981 & 0.4864 \\ 
   \hline
\end{tabular}
\caption{Simulated  $L_1$  distances for $k=500$.}
\label{tab:l1k500}
\end{minipage}
\end{table}

%%%%%%%%%%%%%%%%%%%%%%%%%%
% Data Example

\section{Data example: stocking jeans}\label{sec:data}

In recent years, the lack of sizing representation in clothing stores has been decried by many groups. See, for example, the article ``Women's Clothing Retailers are Still Ignoring the Reality of Size in the US'' from Quartzy \citep{shendruk_2018}. A large component of this problem is that stores do not tend to stock sizes in proportion to the distribution of clothing sizes reflected in the general population; rather, there is a notion of stocking clothing based on the ``typical customer'' for the store. This becomes a self-fulfilling prophecy, however, since choosing not to stock for parts of the population not deemed ``typical customers'' ensures that they cannot ever be customers by definition. %Probably need a citation for that

% article url https://qz.com/quartzy/1309509/we-analyzed-750-pairs-of-jeans-and-found-definitive-skinny-bias-by-us-retailers/

Let us focus on denim jeans, which are widely considered a staple in the American woman's wardrobe. A brick-and-mortar retailer will largely only sell sizes that are currently in stock (while employees may offer to special order sizes not in stock, the majority of patrons will simply leave the store without purchasing if their size is not in stock). Since the purchasing of stock represents a risk by the retailer, it is important to accurately guess which sizes to stock. However, when taking into account both waist size and inseam, as several denim brands do, this can result in a large number of size options for stock. For example, using the Levi's online size chart and their online catalog, we calculated 59 different sizes \citep{levis}. 

% size chart url https://www.levi.com/US/en_US/cms/sizeguide

The retailer could use past sales as a guide for how much of each size to stock. However, this has the effect of perpetuating errors in representation, since patrons who desired to purchase jeans but were unable since their sizes were not in stock can not be represented in the sales data. Instead, the retailer could sample the desired sizes of anyone who enters the store, regardless of whether they make a purchase. This would potentially reflect the distribution of potential customers more accuately than sales data. The retailer most likely would need to take a small to moderate sample initially since too much time with an inaccurate stock distribution may cause unrepresented segments of the population to stop coming altogether.

To simulate such a sampling scheme, we used the National Health and Nutrition Examination Survey from 2015-16 \citep{nhanes} and the Levi's size chart to estimate the true Levi's jean size distribution of adult women in the United States. After restricting to adult women and excluding those in the sample that were pregnant (as this temporarily skews waist size), there were 2697 adult women surveyed in the NHANES, with sample weighting to properly reflect the uninstitutionalized population of the United States. Using the Levi's website, we calculated 59 different jean sizes, as well as a category for those whose waist size is too high to fit into any of Levi's listed sizes (we estimated that 8.39\% of adult women in the United States fit into this category). There was one jean size that was not sampled in the NHANES. We decided that it is unlikely that this jean size does not exist in the entire population of the US, so we gave this size a proprotion equal to one half of the minimum nonzero proportion in the other sizes and then renormalized. 

%NHANES url https://wwwn.cdc.gov/nchs/nhanes/

We then simulated random samples of size 100 from the multinomial distribution with 60 categories using the calculated size distribution for the US adult women population. This simulates the following scenario: the retailer hopes to estimate the distribution of jean sizes his clientele desire by recording the desired jean size of a sample of 100 potential customers, and his potential custormers reflect the size distribution of the US adult female population as a whole, rather than a (potentially smaller-waisted) subpopulation. We included the 60-th category of ``no size'' since, under this scenario in which the potential customers reflect the true distribution, it is possible that customers may arrive at the store hoping to buy jeans before learning that the sizes are not large enough.

We then calculated the MLE for the size distribution by taking the sample counts and dividing by 100. We also used a uniform prior and calculated two different Bayes estimators: the estimator under squared-error ($L_2$) loss, which is the posterior mean, and the estimator under absolute-error ($L_1$) loss, which is the posterior median. In some sense, estimating under a uniform prior tries to balance between two ideas of ``fairness'': representing all sizes (a uniform prior) and representing the size distribution (the posterior mean or median given the sample). 

We repeated this simulation 1000 times. In each case, at least 18 of the size categories were unrepresented in the sample of 100. Thus the MLE estimated zero probabilities for almost one third of the sizes. On the other hand, the Bayes estimator are never zero and thus leans toward being more inclusive of sizes in stocking while still taking into account the sample data. We also calculated the $L_1$, $L_2$, and infinity (maximum) distance between the estimators and the calculated size distribution. The results are in Table \ref{tab:dist}. Note that the Bayes estimators tended to be closer that the MLE to the true size distribution by all three distance measures despite being a biased estimator. The $L_2$ Bayes estimator was closer in $L_1$ in 85.8\% of the simulations, closer in $L_2$ in 94.9\% of the simulations, and even had a smaller maximum distance (i.e. the largest absolute difference among all 60 categories) in 67.8\% of the simulations.  The $L_1$ Bayes estimator was closer in $L_1$ in 95.6\% of the simulations, closer in $L_2$ in 93.5\% of the simulations, and had a smaller maximum distance in 62.5\% of the simulations.

% latex table generated in R 3.4.4 by xtable 1.8-4 package
% Sun Jun 23 15:08:12 2019
\begin{table}[h!]
\centering
\begin{tabular}{lrrr}
  \hline
 & $L_1$ & $L_2$ & Infinity (maximum) \\ 
  \hline
Bayes estimator ($L_2$ Loss) & 0.4599 & 0.0816 & 0.0377 \\ 
  Bayes estimator ($L_1$ Loss) & 0.4368 & 0.0830 & 0.0395 \\ 
  MLE & 0.5081 & 0.0969 & 0.0447 \\ 
   \hline
\end{tabular}
\caption[Distance between estimators and true probability]{The mean $L_1$, $L_2$, and infinity (maximum) distances between the estimators and the true size distribution in 1000 simulations.} 
\label{tab:dist}
\end{table}

In Table \ref{tab:jeans} are the estimated (true) size distribution based on the NHANES, the numbers of a stock of 1000 jeans in a Levi's store this would represent, as well as stock based on the MLE and Bayes estimators from a sample of 100 customers. Note that in three sizes, the probabilities are so small that the true size distribution still recommends to stock zero jeans in those sizes. Also note that in this particular sample 35 of the sizes were unrepresented, and thus the MLE recommended stocking less than half of the sizes. Due to rounding, the stocks are not exactly 1000. The MLE-based stock has 993, the Bayes ($L_2$) has 1006, and the Bayes ($L_1$) has 985. The $L_2$ distances between the true size distribution and the MLE, Bayes ($L_2$), and Bayes ($L_1$) estimators were, respectively, 0.122, 0.077, and 0.079. The absolute errors in the stock (the sum of all absolute differences between stock numbers in each size) for the MLE, Bayes ($L_2$) and Bayes ($L_1$) were, respectively, 687, 486, and 487. The maximum possible absolute error would be around 2000, which would occur if all 1000 pairs of jeans were stocked in the three sizes where zero stock was recommended by the true distribution (additional values are possible due to rounding).

% latex table generated in R 3.4.4 by xtable 1.8-4 package
% Sun Jun 09 12:17:04 2019
\begin{table}[p]
\centering
\begingroup\tiny
\setlength{\tabcolsep}{6pt}
\renewcommand{\arraystretch}{.55}
\begin{tabular}{lrrrrr}
  \hline
Size (Waist.Inseam) & True $p$ & Stock (true) & Stock (MLE) & Stock (Bayes, $L_2$ Loss) & Stock (Bayes, $L_1$ Loss) \\ 
  \hline
24.28 & $\ee{1.386}{-04}$ & 0 & 0 & 7 & 5 \\ 
  24.30 & $\ee{8.657}{-04}$ & 1 & 0 & 7 & 5 \\ 
  24.32 & $\ee{2.020}{-04}$ & 0 & 0 & 7 & 5 \\ 
  25.28 & $\ee{6.930}{-05}$ & 0 & 0 & 7 & 5 \\ 
  25.30 & $\ee{1.949}{-03}$ & 2 & 0 & 7 & 5 \\ 
  25.32 & $\ee{1.000}{-03}$ & 1 & 0 & 7 & 5 \\ 
  26.28 & $\ee{6.218}{-04}$ & 1 & 0 & 7 & 5 \\ 
  26.30 & $\ee{5.900}{-03}$ & 6 & 11 & 14 & 13 \\ 
  26.32 & $\ee{4.262}{-03}$ & 5 & 0 & 7 & 5 \\ 
  27.28 & $\ee{6.986}{-04}$ & 1 & 0 & 7 & 5 \\ 
  27.30 & $\ee{1.268}{-02}$ & 14 & 0 & 7 & 5 \\ 
  27.32 & $\ee{8.139}{-03}$ & 9 & 11 & 14 & 13 \\ 
  27.34 & $\ee{1.214}{-03}$ & 1 & 0 & 7 & 5 \\ 
  28.28 & $\ee{1.356}{-03}$ & 1 & 0 & 7 & 5 \\ 
  28.30 & $\ee{5.628}{-03}$ & 6 & 0 & 7 & 5 \\ 
  28.32 & $\ee{1.305}{-02}$ & 14 & 0 & 7 & 5 \\ 
  28.34 & $\ee{2.675}{-03}$ & 3 & 0 & 7 & 5 \\ 
  29.28 & $\ee{1.847}{-03}$ & 2 & 0 & 7 & 5 \\ 
  29.30 & $\ee{1.181}{-02}$ & 13 & 46 & 34 & 37 \\ 
  29.32 & $\ee{1.471}{-02}$ & 16 & 0 & 7 & 5 \\ 
  29.34 & $\ee{3.006}{-03}$ & 3 & 0 & 7 & 5 \\ 
  30.28 & $\ee{2.008}{-03}$ & 2 & 0 & 7 & 5 \\ 
  30.30 & $\ee{1.203}{-02}$ & 13 & 23 & 21 & 21 \\ 
  30.32 & $\ee{1.350}{-02}$ & 15 & 0 & 7 & 5 \\ 
  30.34 & $\ee{4.348}{-03}$ & 5 & 0 & 7 & 5 \\ 
  31.28 & $\ee{4.102}{-03}$ & 4 & 0 & 7 & 5 \\ 
  31.30 & $\ee{2.713}{-02}$ & 30 & 11 & 14 & 13 \\ 
  31.32 & $\ee{3.483}{-02}$ & 38 & 0 & 7 & 5 \\ 
  31.34 & $\ee{9.686}{-03}$ & 11 & 0 & 7 & 5 \\ 
  32.28 & $\ee{3.122}{-03}$ & 3 & 0 & 7 & 5 \\ 
  32.30 & $\ee{3.314}{-02}$ & 36 & 46 & 34 & 37 \\ 
  32.32 & $\ee{4.211}{-02}$ & 46 & 69 & 48 & 52 \\ 
  32.34 & $\ee{1.271}{-02}$ & 14 & 34 & 27 & 29 \\ 
  33.28 & $\ee{3.128}{-03}$ & 3 & 0 & 7 & 5 \\ 
  33.30 & $\ee{2.157}{-02}$ & 24 & 11 & 14 & 13 \\ 
  33.32 & $\ee{3.553}{-02}$ & 39 & 23 & 21 & 21 \\ 
  33.34 & $\ee{3.100}{-03}$ & 3 & 11 & 14 & 13 \\ 
  34.28 & $\ee{9.037}{-03}$ & 10 & 0 & 7 & 5 \\ 
  34.30 & $\ee{4.477}{-02}$ & 49 & 46 & 34 & 37 \\ 
  34.32 & $\ee{7.274}{-02}$ & 79 & 103 & 68 & 76 \\ 
  34.34 & $\ee{1.153}{-02}$ & 13 & 0 & 7 & 5 \\ 
  16W.S &$\ee{ 8.729}{-03}$ & 10 & 0 & 7 & 5 \\ 
  16W.M &$\ee{ 9.149}{-03}$ & 10 & 0 & 7 & 5 \\ 
  16W.L & $\ee{1.929}{-03}$ & 2 & 0 & 7 & 5 \\ 
  18W.S & $\ee{4.694}{-02}$ & 51 & 80 & 55 & 60 \\ 
  18W.M &$\ee{ 5.803}{-02}$ & 63 & 92 & 62 & 68 \\ 
  18W.L & $\ee{8.997}{-03}$ & 10 & 11 & 14 & 13 \\ 
  20W.S & $\ee{4.353}{-02}$ & 48 & 0 & 7 & 5 \\ 
  20W.M & $\ee{4.290}{-02}$ & 47 & 46 & 34 & 37 \\ 
  20W.L & $\ee{8.351}{-03}$ & 9 & 0 & 7 & 5 \\ 
  22W.S & $\ee{3.617}{-02}$ & 39 & 69 & 48 & 52 \\ 
  22W.M & $\ee{3.962}{-02}$ & 43 & 57 & 41 & 45 \\ 
  22W.L & $\ee{8.016}{-03}$ & 9 & 34 & 27 & 29 \\ 
  24W.S & $\ee{2.187}{-02}$ & 24 & 0 & 7 & 5 \\ 
  24W.M & $\ee{3.911}{-02}$ & 43 & 103 & 68 & 76 \\ 
  24W.L & $\ee{7.272}{-03}$ & 8 & 11 & 14 & 13 \\ 
  26W.S & $\ee{1.851}{-02}$ & 20 & 34 & 27 & 29 \\ 
  26W.M & $\ee{2.248}{-02}$ & 25 & 11 & 14 & 13 \\ 
  26W.L & $\ee{2.546}{-03}$ & 3 & 0 & 7 & 5 \\ 
  No size & $\ee{8.393}{-02}$ & 0 & 0 & 0 & 0 \\ 
   \hline
\end{tabular}
\endgroup
\caption[Size distribution (NHANES) and stock]{The true size distribution based on NHANES 15-16 as well as the stock of about 1000 based on the truth and estimators from a sample of size 100. The total absolute errors of the three estimated stocks are 687, 486, and 487, respectively.} 
\label{tab:jeans}
\end{table}

%%%%%%%%%%%%%%%%%%%%%%%%%%
% Final Remarks

\section{Final remarks}\label{sec:final}

In this article it is shown that in a multinomial model with a moderately large number of $k$ cells and even a reasonable large sample size, the Bayes estimator with a multivariate Beta (or Dirichlet) $\dir(C_{k,n},\dotsc,C_{k,n})$ prior has a smaller risk under squared error loss than that of the MLE on most of the parameter space, for the cases $C_{k,n}=1$, and $C_{k,n}=1/k$. The volume of domination is larger for the case $C_{k,n}=1/k$ than $C_{k,n}=1$. When compared by average performance this domination over the MLE persistes, but is more pronounced for the uniform prior than for the case $C_{k,n}=1/k$. Simulation studies also show the surprising fact that for $C_{k,n}\ge 2$ the performance of the Bayes estimator rapidly declines.

The choice $C_{k,n}=1/k$ is motivated by the fact that it provides a simple approximation of the nonparametric estimation of an unknown distribution with a Dirichlet process prior \`a la \citet{ferguson1973}. In this context, there have been some simulation studies where nonparametric Bayes procedures have been found to outperform frequentist ones. A dramatic example may be found in \citet{david1}. Here a random sample is drawn from a parametric distribution on Kendall's planar shape space with density $f_0=f(\mathord{\cdot},\theta_0)$ with a given parameter value $\theta=\theta_0$, and three estimates of $f_0$ are compared: $\hat f=f(\mathord{\cdot},\hat\theta)$ with $\hat\theta$ as the MLE of the parameter $\theta$, the standard nonparametric kernel density estimator $g$ of $f$, and a nonparametric Bayes estimator $h$ of $f$. One would expect that the asymptotically efficient MLE of a correctly specified parametric model would perform better than its nonparametric competitors. But, surprisingly, a set of 20 simulations each with a fresh random sample of size 200 show the following average $L_1$-distances $d$: (1) $d(h,f_0)=0.44$, (2) $d(g,f_0)=1.03$, (3) $d(\hat f,f_0)=0.75$. Although the present study points to the superiority of the Bayes procedure compared to frequentist ones such as the histogram method, the differences do not appear to be that dramatic. Perhaps the method of representing an unknown density as a mixture of an appropriate parametric family and estimating the mixture by Ferguson’s Dirichlet process, as used by \citet{david1} should be preferred (also see \citet{ghosh_ram2003} and \citet{ghosal2017}). Still the present article provides a simple and widely applicable Bayes estimation of a nonparametric distribution, which perhaps may be sharpened to be more effective. 

\clearpage
\bibliography{references}

\begin{thebibliography}{10}
\providecommand{\natexlab}[1]{#1}
\providecommand{\url}[1]{\texttt{#1}}
\expandafter\ifx\csname urlstyle\endcsname\relax
  \providecommand{\doi}[1]{doi: #1}\else
  \providecommand{\doi}{doi: \begingroup \urlstyle{rm}\Url}\fi

\bibitem[Bhattacharya and Dunson(2010)]{david1}
A.~Bhattacharya and D.~Dunson.
\newblock Nonparametric {B}ayesian density estimation on manifolds with
  applications to planar shapes.
\newblock \emph{Biometrika}, 97:\penalty0 851--865, 2010.

\bibitem[Bhattacharya et~al.(2016)Bhattacharya, Lin, and
  Patrangenaru]{textbook}
R.~N. Bhattacharya, L.~Lin, and V.~Patrangenaru.
\newblock \emph{A Course in Mathematical Statistics and Large Sample Theory}.
\newblock Springer Texts in Statistics, Springer, 2016.

\bibitem[Bickel and Doksum(2001)]{bickel2001}
P.~Bickel and K.~Doksum.
\newblock \emph{Mathematical Statistics: Basic Ideas and Selected Topics}.
\newblock Number v. 1 in Holden-Day series in probability and statistics.
  Prentice Hall, 2001.

\bibitem[CDC(2018)]{nhanes}
CDC.
\newblock National health and nutrition examination survey 2015-16 [data
  files], 2018.
\newblock URL \url{https://wwwn.cdc.gov/nchs/nhanes/}.

\bibitem[Ferguson(1973)]{ferguson1973}
T.~S. Ferguson.
\newblock A bayesian analysis of some nonparametric problems.
\newblock \emph{The annals of statistics}, pages 209--230, 1973.

\bibitem[Ghosal and van~der Vaart(2017)]{ghosal2017}
S.~Ghosal and A.~van~der Vaart.
\newblock \emph{Fundamentals of Nonparametric Bayesian Inference}.
\newblock Cambridge Series in Statistical and Probabilistic Mathematics.
  Cambridge University Press, 2017.

\bibitem[Ghosh and Ramamoorthi(2003)]{ghosh_ram2003}
J.~K. Ghosh and R.~V. Ramamoorthi.
\newblock \emph{Bayesian nonparametrics}.
\newblock Springer, 2003.

\bibitem[{Levi's}(2019)]{levis}
{Levi's}.
\newblock Levi's size chart and guide, 2019.
\newblock URL \url{https://www.levi.com/US/en_US/cms/sizeguide}.

\bibitem[Shendruk(2018)]{shendruk_2018}
A.~Shendruk.
\newblock Women's clothing retailers are still ignoring the reality of size in
  the {US}.
\newblock \emph{Quartzy}, Jun 2018.
\newblock URL
  \url{https://qz.com/quartzy/1309509/we-analyzed-750-pairs-of-jeans-and-found-definitive-skinny-bias-by-us-retailers/}.

\bibitem[{van Valkenhoef} and Tervonen(2016)]{hitandrun}
G.~{van Valkenhoef} and T.~Tervonen.
\newblock \emph{hitandrun: ``Hit and Run" and ``Shake and Bake" for Sampling
  Uniformly from Convex Shapes}, 2016.
\newblock URL \url{https://CRAN.R-project.org/package=hitandrun}.
\newblock R package version 0.5-3.

\end{thebibliography}

\end{document}